\DeclareMathAlphabet{\mathscr}{LS1}{stixscr}{m}{n}
\newtheorem{theorem}{Theorem}[section]
\newtheorem{lemma}[theorem]{Lemma}
\newtheorem*{lemma*}{Lemma}
\newtheorem{proposition}[theorem]{Proposition}
\theoremstyle{definition}
\newtheorem{definition}[theorem]{Definition}
\theoremstyle{remark}
\newtheorem{remark}[theorem]{Remark}
\newtheorem{example}[theorem]{Example}
\newcommand{\B}{\mathbf{B}}
\newcommand{\R}{\mathbb{R}}
\newcommand{\Z}{\mathbb{Z}}
\newcommand{\SSS}{\mathbb{S}}
\newcommand{\lrb}[1]{\left\lbrace#1\right\rbrace}
\renewcommand{\phi}{\varphi}
\newcommand{\Bord}{\mathrm{Bord}}
\newcommand{\hofib}[1]{\mathrm{hofib}\left(#1\right)}
\newcommand{\hofiblax}[1]{\mathrm{hofib}_{\mathrm{lax}}\left(#1\right)}
\newcommand{\conn}[1]{#1_\nabla}
\newsavebox{\pullback}
\sbox\pullback{%
\begin{tikzpicture}%
\draw (0,0) -- (1ex,0ex);%
\draw (1ex,0ex) -- (1ex,1ex);%
\end{tikzpicture}}
\title{Integrals detecting degree 3 string cobordism classes}
\author{Domenico Fiorenza}
\address{Sapienza Universit\`a di Roma; Dipartimento di Matematica ``Guido Castelnuovo'', P.le Aldo Moro, 5 - 00185 - Roma, Italy; 
}
\email{fiorenza@mat.uniroma1.it}
\author{Eugenio Landi}
\address{Sapienza Universit\`a di Roma; Dipartimento di Matematica ``Guido Castelnuovo'', P.le Aldo Moro, 5 - 00185 - Roma, Italy; }
\email{eugenio.landi@uniroma1.it}
\begin{document}

\begin{abstract}
    {The third string bordism group $\Bord_3^{\mathrm{String}}$ is known to be $\mathbb{Z}/24\mathbb{Z}$. Using Waldorf's notion of a geometric string structure on a manifold, Bunke--Naumann and Redden have exhibited integral formulas involving the Chern-Weil form representative of the first Pontryagin class and the canonical 3-form of a geometric string structure that realize the isomorphism $\Bord_3^{\mathrm{String}} \to \mathbb{Z}/24\mathbb{Z}$. We will show how these formulas naturally emerge when one considers certain natural $\mathrm{U}(1)$-valued and $\R$-valued 3d TQFT associated with the classifying stacks of Spin bundles with connection and of String bundles with geometric structure, respectively.}
\end{abstract}

\maketitle

\section{Introduction}
{
It is not hard to show that in degree $n$ less or equal to $6$ the $\mathrm{String}$ bordism group, $\Bord_n^{\mathrm{String}}$ is isomorphic to the framed bordism  group $\Bord^\mathrm{fr}_n$. Indeed, $\mathrm{BString}=\mathrm{BO}\langle 8\rangle$ is the eighth stage in the Whitehead tower of the classifying space of orthogonal group and so the obstructions to lifting the classifying map of the tangent bundle of an $n$-dimensional string manifold $M_n$ through all the higher stages of the tower lie in the cohomology groups $H^k(M_n;\pi_k\mathrm{BO})$ for $k\geq8$. In particular, all of these obstructions vanish if $n\leq 7$. By the celebrated Pontryagin-Thom isomorphism one has $\Bord^\mathrm{fr}_n\cong \pi_n(\SSS)$, where $\SSS$ is the sphere spectrum or, equivalently, that $\Bord^\mathrm{fr}_n$ is isomorphic to the $n$-th stable homotopy group of the spheres. All this together, for $n=3$, gives 
\[
 \Bord_3^{\mathrm{String}}\cong \Bord_3^{\mathrm{fr}}\cong \pi_3(\SSS)\cong \Z/24\mathbb{Z}
\]
One may wish to express the isomorphism $\varphi\colon\Bord_3^{\mathrm{String}}\xrightarrow{\cong}\Z/24\Z$ as some characteristic number given by integrating some \emph{canonical} differential 3-form on a closed string 3-manifold $M_3$: $\phi[M_3]=\int_{M_3}\omega_{3;M_3}$. Clearly, there is no hope that this can be true, since the integral takes real values while $\phi$ takes values in $\Z/24\Z$, and there is no injective group homomorphism from $\Z/24\Z$ to $\mathbb{R}$. There is however a variant of this construction that may work. Instead of considering just a string 3-manifold $M_3$, one considers a string 3-manifold $M_3$ endowed with some additional structure $\Upsilon$. This structure should be such that any $M_3$ admits at least one $\Upsilon$. To the pair $(M_3,\Upsilon)$ there could be associated a canonical 3-form  $\omega_{3;M_3,\Upsilon}$ such that $\int_{M_3}\omega_{3;M_3,\Upsilon}$ takes integral values. Then, if a change in the additional structure $\Upsilon$ results in a change in the value $\int_{M_3}\omega_{3;M_3,\Upsilon}$ by a multiple of 24 one would have a well defined element
\[
\int_{M_3}\omega_{3;M_3,\Upsilon} \mod 24
\]
in $\Z/24\Z$, 
depending only on the string 3-manifold $M_3$; and this could indeed represent the isomorphism $\varphi$. In this form the statement is indeed almost true. The correct version of it has been found by Bunke and Naumann \cite{bunke-naumann} and, independently, by Redden \cite{redden}. Their additional datum $\Upsilon$ consists of a triple $(\eta^{}_{M_3},W_4,\nabla^{}_{W_4})$, where $\eta_{M_3}^{}$ is a geometric string structure on $M_3$ in the sense of Waldorf \cite{waldorf}, $W_4$ is a spin 4-manifold with $\partial W_4=M_3$,\footnote{Such a $W_4$ surely exists, since $\Bord_3^{\mathrm{Spin}}=0$.} and $\nabla^{}_{W_4}$ is a spin connection on $W_4$ such that the restriction $\nabla^{}_{W_4}\bigr\vert_{M_3}$ coincides with the spin connection datum of the geometric string structure $\eta_{M_3}$. Out of the data $(M_3,\eta^{}_{M_3},W_4,\nabla^{}){W_4})$ one can compute 
\[
\psi(M_3,\eta^{}_{M_3},W_4,\nabla^{}_{W_4}):=\frac{1}{2}\int_{W_4} \mathbf{p}_1^{\mathrm{CW}}(\nabla^{}_{W_4})-\int_{M_3} \omega^\eta_{M_3},
\]
where $\mathbf{p}_1^{\mathrm{CW}}(\nabla^{}_{W_4})$ is the Chern-Weil 4-form for the first Pontryagin class, evaluated on the connection $\nabla^{}_{W_4}$, and $\omega^\eta_{M_3}$ is the canonical 3-form associated with the geometric string structure $\eta^{}_{M_3}$.\footnote{This expression has been recently considered by Gaiotto--Johnson-Freyd--Witten in the context of minimally supersymmetric models in two dimensions \cite{gfw}. {\color{black}A twisted version, also crucially relying on the triviality of Spin bordism groups in dimensions $4k+3$ for $k\leq 8$, had previously been considered by Sati in \cite{sati-mbranes2010}}.}
From the interplay between geometric string structures and differential cohomology it follows that $\psi(M_3,\eta^{}_{M_3},W_4,\nabla^{}_{W_4})\in \Z$. Keeping $(M_3,\eta^{}_{M_3})$ fixed and letting $(W_4,\nabla^{}_{W_4})$ vary, one finds
\[
\psi(M_3,\eta^{}_{M_3},W_4^1,\nabla^{}_{W_4^1})-\psi(M_3,\eta^{}_{M_3},W_4^0,\nabla^{}_{W_4^0})=\frac{1}{2}\int_{W_4}p_1(W_4),
\]
where $W_4=W_4^1\cup_M (W_4^0)^{\mathrm{opp}}$ denotes the closed spin 4-manifold obtained gluing together $W_4^0$ (with the opposite orientation) and $W_4^1$ along $M_3$, and $\frac{1}{2}p_1(W_4)\in H^4(W_4;\Z)$ is the first fractional Pontryagin class of $W_4$. One has
\[
\hat{A}(W_4)=-\frac{1}{24}\int_{W_4}p_1(W_4)
\]
for any closed oriented 4-manifold $W_4$. By the Atiyah-Singer index theorem, the $\hat{A}$-genus of a closed oriented manifold is an integer if the manifold is spin, and is an even integer if moreover the dimension of the manifold is of the form $8k+4$. Therefore in our case we have that
$\int_{W_4}p_1(W_4)\in 48\Z$ and so $\int_{W_4}\frac{1}{2}p_1(W_4)\in 24\Z$. Therefore the function 
\[
\psi(M_3,\eta):=\psi(M_3,\eta^{}_{M_3},W_4,\nabla^{}_{W_4}) \mod 24
\]
is well defined. One concludes by showing that $\psi(M_3,\eta^{}_{M_3})$ is actually independent of the geometric string structure $\eta^{}_{M_3}$, and only depending on the string cobordism class of $M_3$. Additivity is manifest from the definition, so the above integral formula defines a group homomorphism $\psi\colon \Bord_3^{\mathrm{String}}\xrightarrow{\cong}\Z/24\Z$. A direct computation with the canonical generator of $\Bord_3^{\mathrm{String}}$, i.e., with $S^3$ endowed with the trivialization of its tangent bundle coming from  $S^3\cong \mathrm{SU}(2)$, then shows that $\psi$ is indeed an isomorphism.
\par
The aim of this note is to show how the above integral formula for $\psi$, as well as its main properties, naturally emerge in the context of topological field theories with values in the symmetric monoidal categories associated with morphisms of abelian groups. {\color{black}The topological field theories we will be dealing with will have ``background fields'' described by maps to some smooth stack, and one of the aims of this note is precisely to show how the use of this formalisms clarifies and organizes a few aspects of bordism invariants. In particular, we show how $\mathbb{R}$-valued bordism invariants can naturally be realized by integral formulas in the context of functorial field theories with background fields, and the example of $\Bord_3^{\mathrm{String}}$ shows these theories are flexible enough to also capture a few torsion phenomena (see also \cite{b-e:How-do-field-theories-detect-the-torsion} for closely related results). It is an interesting question whether every torsion bordism invariant admits a functorial field theory realization.}

{\color{black}
\subsection*{Notation and conventions}
Throughout the whole note, we will denote by $M_d$ a $d$-dimensional smooth manifold, and by $M$ a smooth manifold whose dimension is not relevant. For $G$ a Lie group, we will denote by $BG$ the classifying space of $G$, i.e., the topological space (unique up to homotopy) such that 
\[
[M,BG]\cong \{\text{principal $G$-bundles over $M$}\}/\text{isomorphism},
\]
where $[-,-]$ denotes the set of homotopy classes of maps, and by $\mathbf{B}G$ the smooth stack of principal $G$-bundles, i.e., the smooth stack associating with a smooth manifold $M$ the groupoid of principal $G$-bundles over $M$ with their isomorphisms. Clearly, $\mathbf{B}G$ retains finer information with respect to $BG$, which is recovered from $\mathbf{B}G$ by topological realization. For this reason, in the note we will always prefer working with the smooth stack $\mathbf{B}G$ rather than with the classifying space $BG$. The same consideration applies to all the constructions in the note, e.g., we will prefer working with the smooth stack $\mathbf{B}^3\mathrm{U}(1)$ over the Eilenberg-MacLane space $K(\mathbb{Z},4)$.

We will be assuming the reader has some familiarity with homotopy theory, higher categories, and stacks, but even a barely intuitive idea of what a smooth stack is, as sketched at the beginning of 
Section \ref{sec:cat-from-mor}, should suffice to follow all of the constructions in the note. We refer the reader to \cite{cosoneurs} for a comprehensive introduction to smooth stacks in the context of field theories, and to \cite{lurie-htt} for the general theory of higher categories and higher stacks.

All of the commutative diagrams in higher categories are to be read with given fillers, that are omitted for ease of notation. On a few occasions, e.g., in Remark \ref{rem:loop-space-torsor}, when the fillers are invertible we invert them without the notation (that omits the fillers at all) showing this. We hope this will cause no confusion to the reader.
The authors thank the Referee for their helpful comments and suggestions.
}
}

\section{Symmetric monoidal categories from morphisms of abelian groups and TQFTs}\label{sec:cat-from-mor}
{By $\Bord_{d,d-1}^\xi(X)$ we will denote the symmetric monoidal category of $(d,d-1)$-bordism with tangential structure $\xi$ and {\color{black}target} $X$. Contenting ourselves with an informal definition,\footnote{See, e.g., \cite{lurie, gradypavlov}  for a rigorous definition.} we mean that the objects of $\Bord_{d,d-1}^\xi(X)$ are $(d-1)$-dimensional closed manifolds $M_{d-1}$ equipped with a certain reduction $\xi$ of the structure group of the ``$d$-stabilized'' tangent bundle $TM\oplus \mathbb{R}$, and a map $f\colon M\to X$ to a  space, or more generally to a smooth stack, $X$. {\color{black} Not to leave this definition too abstract, let us recall that a smooth stack $X$ is by definition a sheaf of $\infty$-groupoids/Kan complexes over the site of smooth manifolds with open covers. This means that $X$ is a rule that associates with any smooth manifold $M$ an $\infty$-groupoid $X(M)$ in such a way that $X(M)$ can be pulled back along smooth maps $N\to M$ (i.e., $X$ is a contravariant functor), and such that $X(M)$ is completely and uniquely determined by its restrictions to the open sets of a cover $\{U_i\}$ of $M$, by the transition functions for these restrictions over double intersections, by the homotopies between transition functions on the triple overlaps, etc. (one says that ``the descent data are effective''). Classical examples are the 1-stack $\mathbf{B}G$, for $G$ a Lie group, associating a manifold $M$ with the groupoid $\mathbf{B}G(M)$ having principal $G$-bundles over $M$ as objects and isomorphisms of principal $G$ bundles as morphism, or the $0$-stacks $\Omega^{k}$ and $\Omega^{k}_{cl}$ associating with a smooth manifold $M$ the sets $\Omega^k(M)$ and $\Omega^k_{cl}(M)$ of smooth $k$-forms and smooth closed $k$-forms on $M$, respectively, seen as discrete groupoids (i.e., with only identities as morphisms).  Classical nonexamples are 
$\Omega^k_{\mathrm{ex}}$, since a compatible family of local exact $k$-forms does not necessarily define a globally exact $k$-form, or Riemannian metrics, since the pullback of a Riemannian metric along an arbitrary smooth map is not necessarily a nondegenerate pairing.

In the Physics' parlance, the map $f$ is called a ``background field'' of the theory; following this terminology, we will call $X$ ``the stack of background fields''. Notice that every smooth manifold $M$ can be seen as a smooth stack under the Yoneda identification of $M$ with the functor of smooth maps to $M$. Via this identification, objects of $X(M)$ are equivalently seen as morphisms of smooth stacks $M\to X$ and so as background fields; 1-morphisms in $X(M)$ then correspond to homotopies of maps $M\to X$ and so to gauge transformations of the background fields, and so on.} 

Morphisms $W_d\colon M_{d-1}^0\to M_{d-1}^1$ in   $\Bord_{d,d-1}^\xi(X)$ are $d$-manifolds with a $\xi$-structure on the tangent bundle and map to $X$ such that $\partial W_d=M_{d-1}^0\coprod M_{d-1}^1$ and such that the restrictions of the tangential structure and of the map to the target of $W_d$ coincide ``up to a sign'' with those of the $M_{d-1}^i$'s. The only tangential structures we will be concerned with will be orientations, spin, and string structures; we will denote them by or, Spin, and String, respectively. The monoidal structure on $\Bord_{d,d-1}^\xi(X)$ is given by disjoint union.

\begin{remark}
The trivial bundle $\mathbb{R}$ will always be oriented with $\{1\}$ as a positively oriented basis. This way the datum of a $d$-stable orientation on  $(d-1)$-manifold $M_{d-1}$ reduces to the datum of an orientation of $M_{d-1}$.  
\end{remark}

\begin{example}\label{ex:closed-forms}
Let $X=\Omega_{cl}^{d-1}$ be the smooth stack of closed $(d-1)$-forms. Then an object of  $\Bord_{d,d-1}^\mathrm{or}(\Omega_{cl}^{d-1})$ is given by a closed oriented $(d-1)$-manifold $M_{d-1}$ equipped with an (automatically closed) $(d-1)$-form $\omega_{d-1;M_{d-1}}$. A morphism $W\colon M^0_{d-1}\to M^1_{d-1}$ in $\Bord_{d,d-1}^\mathrm{or}(\Omega_{cl}^{d-1})$ is the datum of an oriented $d$-manifold $W_d$ with $\partial W_d=M_{d-1}^1\coprod (M_{d-1}^0)^{\mathrm{opp}}$, where ``opp'' denotes the opposite orientation, equipped with a closed $(d-1)$-form $\omega_{d-1;W_d}$ such that
\[
\omega_{d-1;W_d}\bigr\vert_{M_{d-1}^i}=\omega_{d-1;M_{d-1}^i}
\]
for $i=0,1$.
\end{example}

\begin{definition}\label{def:TQFT}
Let $\mathcal{C}$ be a symmetric monoidal category. A $(d,d-1)$-dimensional $\mathcal{C}$-valued topological quantum field theory (TQFT for short) with tangential structure $\xi$ and target $X$ is a
symmetric monoidal functor 
\[
Z\colon \Bord_{d,d-1}^\xi(X)\to \mathcal{C}.
\]
\end{definition}
A typical target is $\mathcal{C}=\mathsf{Vect}$, the category of vector spaces (over some fixed field $\mathbb{K}$). In this case the functor $Z$ maps the $(d-1)$-manifold $M_{d-1}$ (with tangential structure and background fields) to a vector space $V_{M_{d-1}}$ and the $d$-manifold $W_d$ to a liner map $\varphi_W\colon V_{M_{d-1}^0}\to V_{M_{d-1}^1}$. Equivalently, if $W_d$ is a $d$-manifold with $\partial W_d=M_{d-1}$, then $Z(W_d)$ is an element of $V_{M_{d-1}}$.\footnote{This corresponds to thinking of all of the boundary of $W_d$ as ``outgoing'' so that $M_{d-1}^0=\emptyset$, and to identifying elements of $V_{M_{d-1}}$ with linear maps $\mathbb{K}\to V_{M_{d-1}}$ by means of the distinguished basis $\{1\}$ of $\mathbb{K}$.} This example is so typical\footnote{It is the original definition of Atiyah \cite{atiyahtqft}} that when one speaks of a TQFT without specifying the target $\mathcal{C}$ one means $\mathcal{C}=\mathsf{Vect}$. Yet there are plenty of interesting targets other than $\mathsf{Vect}$. Here we will be concerned with the symmetric monoidal categories naturally associated with abelian groups and with morphisms of abelian groups.

{\color{black}In the rest of this section, we introduce several examples and constructions that we will assemble in Section \ref{sec:bnr} to obtain our main result.}
\begin{definition}\label{def:A-tens}
Let $(A,+)$ be an abelian group.\footnote{We will be also using the multiplicative notation $(A,\cdot)$.} By $A^\otimes$ we will denote the symmetric monoidal category with
\begin{align*}
\mathsf{Ob}(A^\otimes)&=A;  \\
\\
\mathsf{Hom}_{A^\otimes}(a,b)&=\begin{cases}
\mathrm{id}_a\qquad\text{if $a=b$}\\
\emptyset \qquad\text{otherwise.}
\end{cases}
\end{align*}
The tensor product is given by the sum (or multiplication) in $A$ and the unit object is the zero (or the unit) of $A$. Associators, unitors and braidings are the trivial ones. 
\end{definition}
 \begin{remark}
Notice that $A^\otimes$ is a rigid monoidal category: the dual of an object $a$ is given by the opposite element $-a$ (or the inverse element $a^{-1})$.
\end{remark}
Spelling out Definition \ref{def:TQFT} for $\mathcal{C}=A^\otimes$ we see that a TQFT with tangential structure $\xi$ and target $X$ with values in $A^\otimes$ consists into a rule that associates with any closed $(d-1)$-manifold $M_{d-1}$ (with tangential structure and background fields) an element $Z(M_{d-1})\in A$ in such a way that:
\begin{itemize}
\item $Z(M_{d-1}\sqcup M'_{d-1})=Z(M_{d-1})+Z(M'_{d-1})$ (monoidality);
\item if $M_{d-1}=\partial W_d$ then $Z( M_{d-1})=0$ (functoriality).
\end{itemize}

\begin{example}[Stokes' theorem for closed forms]\label{ex:Stokes-closed}
A paradigmatic example of a TQFT with values in an abelian group is provided by Stokes' theorem. Take the stack $X$ of background fields to be the smooth stack $\Omega_{cl}^{d-1}$ of closed $(d-1)$-forms as in Example \ref{ex:closed-forms}, and let $\mathbb{R}^\otimes$ be the symmetric monoidal category associated with the abelian group $(\mathbb{R},+)$. Then
\begin{align*}
Z_{\mathrm{Stokes}^{\mathrm{cl}}}\colon \Bord_{d,d-1}^{\mathrm{or}}(\Omega_{cl}^{d-1})&\to \mathbb{R}^\otimes\\
(M_{d-1},\omega_{d-1;M_{d-1}})&\mapsto \int_{M_{d-1}}\omega_{d-1;M_{d-1}}
\end{align*}
is a TQFT. The monoidality \[
Z_{\mathrm{Stokes}^{\mathrm{cl}}}((M_{d-1},\omega_{d-1;M_{d-1}})\sqcup(M'_{d-1},\omega_{d-1;M'_{d-1}}))=\int_{M_{d-1}}\omega_{d-1;M_{d-1}}+\int_{M'_{d-1}}\omega_{d-1;M'_{d-1}}
\]is given by the additivity of the integral, and functoriality is precisely Stokes' theorem: if $M_{d-1}=\partial W_d$ and $\omega_{d-1;M_{d-1}}$ is the restriction to $M_{d-1}$ of a closed $(d-1)$-form $\omega_{d-1;W_d}$ on $W_d$, we have 
\[
Z_{\mathrm{Stokes}^{\mathrm{cl}}}(M_{d-1},\omega_{d-1;M_{d-1}})=\int_{M_{d-1}}\omega_{d-1;M_{d-1}}=\int_{\partial W_{d}}\omega_{d-1;W_d}=\int_{W_{d}}\mathrm{d}\omega_{d-1;W_d}=0.
\]

\end{example}

\begin{example}[$\mathbb{R}$-valued oriented bordism invariants]\label{rem:chern-weil}
Chern-Weil theory provides differential form representatives for Pontryagin classes. These representatives can be seen as morphisms of smooth stacks
\[
\mathbf{p}^{\mathrm{CW}}_k\colon \mathbf{B}\mathrm{SO}_\nabla \to \Omega_{cl}^{4k},
\]
where $\mathbf{B}\mathrm{SO}_\nabla=\varinjlim \mathbf{B}\mathrm{SO}(n)_\nabla$ is the stack of principal $\mathrm{SO}$-bundles with connections, {\color{black} see, e.g.,  \cite{FreedHopkins} and \cite{FSS})}. We have an induced symmetric monoidal morphism
\[
\Bord_{4k+1,4k}^{\mathrm{or}}(\mathbf{B}\mathrm{SO}_\nabla)\to \Bord_{4k+1,4k}^{\mathrm{or}}(\Omega_{cl}^{4k})
\]
and so a TQFT
\begin{align*}
Z_{\mathrm{CW}}\colon \Bord_{4k+1,4k}^{\mathrm{or}}(\mathbf{B}\mathrm{SO}_\nabla)&\to \mathbb{R}^\otimes\\
(M_{4k},P_{M_{4k}}, \nabla_{P_{M_{4k}}})&\mapsto \int_{M_{4k}}\mathbf{p}^{\mathrm{CW}}_k(\nabla_{P_{M_{4k}}}).
\end{align*}
If one keeps the principal bundle $P_{M_{4k}}$ fixed and changes the $\mathfrak{so}$-connection $\nabla_{P_{M_{4k}}}$ into a new connection $\nabla_{P_{M_{4k}}}'$, the difference $\mathbf{p}^{\mathrm{CW}}_k(\nabla_{P_{M_{4k}}})-\mathbf{p}^{\mathrm{CW}}_k(\nabla_{P_{M_{4k}}}')$ is an exact form, so that our TQFT descends to a TQFT with target fields $\mathbf{B}\mathrm{SO}$, i.e., we have a commutative diagram
\[
\begin{tikzcd}
\Bord_{4k+1,4k}^{\mathrm{or}}(\mathbf{B}\mathrm{SO}_\nabla)\ar[rr]\ar[d] &&\mathbb{R}^\otimes\\
\Bord_{4k+1,4k}^{\mathrm{or}}(\mathbf{B}\mathrm{SO})\ar[rru]&&{\quad.}
\end{tikzcd}
\]
The ($(4k+1)$-stabilized) tangent bundle provides a symmetric monoidal section to the forgetful morphism $\Bord_{4k+1,4k}^{\mathrm{or}}(\mathbf{B}\mathrm{SO})\to \Bord_{4k+1,4k}^{\mathrm{or}}$, so we get an absolute (i.e., with trivial background) oriented TQFT
\begin{align*}
Z_{\mathrm{PT}}\colon \Bord_{4k+1,4k}^{\mathrm{or}}&\to \mathbb{R}^\otimes\\
M_{4k}&\mapsto \int_{M_{4k}}\mathbf{p}_k(TM_{4k}).
\end{align*}
The same argument applies replacing the single Pontryagin class $p_{k}$ with a polynomial $\Phi=\Phi(p_1,p_2,\dots)$ in the Pontryagin classes. This way one obtains plenty of $\mathbb{R}$-valued oriented TQFTs. These are in particular $\mathbb{R}$-valued oriented bordism invariants, and Thom's isomorphism
\[
\mathbf{\Omega}^{\mathrm{SO}}_\bullet\otimes \mathbb{R}\cong \mathbb{R}[p_1,p_2,\dots]
\]
implies that indeed every $\mathbb{R}$-valued oriented bordism invariant is of this form.
\end{example}
{\color{black}
\begin{remark}
 It is an interesting question whether the formalism of functorial field theories is flexible enough to capture also the torsion part of bordism invariants. The results in this note, as well as the closely related ones by Berwick-Evans \cite{b-e:How-do-field-theories-detect-the-torsion} show that at least in some cases this is indeed so, and possibly suggest one could expect an affirmative answer to this question in general. In the concluding Remark \ref{rem:further-examples} we discuss the possibility of realizing the isomorphism 
 \[
 \Bord_7^{\mathrm{Fivebrane}}\cong \Bord_7^{\mathrm{fr}}\cong \Z/240\Z
 \]
 along the same lines presented here for the isomorphism $\Bord_3^{\mathrm{String}}\cong \Bord_3^{\mathrm{fr}}\cong \Z/24\Z$.
\end{remark}
}

More generally, one can associate a symmetric monoidal category with a morphism of abelian groups, as follows.   
\begin{definition}\label{def:phi-tens}
Let $\varphi_A\colon A_{\mathrm{mor}}\to A_{\mathrm{ob}}$ be a morphism of abelian groups. By $\varphi_A^\otimes$ we will denote the symmetric monoidal category with
\begin{align*}
\mathsf{Ob}(\varphi_A^\otimes)&=A_{\mathrm{ob}};  \\
\\
\mathsf{Hom}_{\varphi_A^\otimes}(a,b)&=\{x\in A_{\mathrm{mor}}\,:\, a+\varphi_A(x)=b\}.
\end{align*}
The composition of morphism is given by the sum in $A_{\mathrm{mor}}$.
The tensor product of objects and morphisms is given by the sum  in $A_{\mathrm{ob}}$ and in $A_{\mathrm{mor}}$, respectively. The unit object is the zero in $A_{\mathrm{ob}}$. Associators, unitors and braidings are the trivial ones, i.e., they are given by the zero in $A_{\mathrm{mor}}$. 
\end{definition}
\begin{remark}
It is easy to see that Definition \ref{def:phi-tens} is indeed a generalization of Definition \ref{def:A-tens}: if $\iota_A\colon \mathbf{0}\to A$ denotes the initial morphism for $A$, then one has an evident isomorphism $\iota_A^\otimes\cong A^\otimes$.   
\end{remark}
As we did for TQFTs with values in $A^\otimes$, we can spell out the data of  a TQFT (with tangential structure $\xi$ and target $X$) with values in $\varphi_A^\otimes$. It consists of a rule that associates with any closed $(d-1)$-manifold $M_{d-1}$ (with tangential structure and background fields) an element $Z(M_{d-1})\in A_{\mathrm{ob}}$, and with any $d$-manifold $W_{d}$ (with tangential structure and background fields) an element $Z(W_d)\in A_{\mathrm{mor}}$ in such a way that:
\begin{itemize}
\item $Z(M_{d-1}\sqcup M'_{d-1})=Z(M_{d-1})+Z(M'_{d-1})$ and $Z(W_d\sqcup W'_d)=Z(W_d)+Z(W'_d)$ (monoidality);
\item if $M_{d-1}=\partial W_d$ then $Z( M_{d-1})=\varphi_A\left(Z(W_d)\right)$ (functoriality).
\end{itemize} 
The following example is the immediate generalization of Example \ref{ex:Stokes-closed}.
\begin{example}[Stokes' theorem]\label{ex:Stokes}
 Take as stack of background fields the smooth stack $\Omega^{d-1}$ of smooth $(d-1)$-forms. Then we have a TQFT
 \begin{align*}
 Z_{\mathrm{Stokes}}\colon \Bord_{d,d-1}^{\mathrm{or}}(\Omega^{d-1})&\to \mathrm{id}_\mathbb{R}^\otimes\\
(M_{d-1},\omega_{d-1;M_{d-1}})&\mapsto \int_{M_{d-1}}\omega_{d-1;M_{d-1}}\\
(W_d,\omega_{d-1;W_d})&\mapsto \int_{W_d}\mathrm{d}\omega_{d-1;W_d}.
\end{align*}
As in Example \ref{ex:Stokes-closed}, monoidality is given by the additivity of the integral and functoriality is given by Stokes' theorem: if $M_{d-1}=\partial W_d$ and $\omega_{d-1;M_{d-1}}$ is the restriction to $M_{d-1}$ of a $(d-1)$-form $\omega_{d-1;W_d}$ on $W_d$, then we have 
\begin{align*}
Z_{\mathrm{Stokes}}(M_{d-1},\omega_{d-1;M_{d-1}})&=\int_{M_{d-1}}\omega_{d-1;M_{d-1}}=\int_{\partial W_{d}}\omega_{d-1;W_d}=\int_{W_{d}}\mathrm{d}\omega_{d-1;W_d}\\
&=Z_{\mathrm{Stokes}}(W_d,\omega_{d-1;W_d})=\mathrm{id}_\mathbb{R}\left(Z_{\mathrm{Stokes}}(W_d,\omega_{d-1;W_d})\right).
\end{align*}

\end{example}

\begin{example}[Holonomy and curvature]\label{ex:hol-curv}
    A generalization of the above Example for $d=2$ is obtained by taking $X=\mathbf{B}\mathrm{U}(1)_{\nabla}$, the stack of principal $\mathrm{U}(1)$-bundles with connection. As target we take the symmetric monoidal category  associated with the exponential morphism
    \[
    \exp(2\pi i -)\colon \mathbb{R}\to \mathrm{U}(1).
    \]
    There's a natural TQFT produced by these data and given by
    \begin{align*}
 Z_{\mathrm{hol}}\colon \Bord_{2,1}^{\mathrm{or}}(\mathbf{B}\mathrm{U}(1)_{\nabla})&\to \exp(2\pi i -)^\otimes\\
(M_{1},P_{M_1},\nabla^{}_{{M_1}})&\mapsto \mathrm{hol}_{M_1}(\nabla^{}_{{M_1}})\\
(W_2,P_{W_2},\nabla^{}_{{W_2}})&\mapsto \frac{1}{2\pi i}\int_{W_2}F_{\nabla^{}_{{W_2}}},
\end{align*}
where $\mathrm{hol}_{M_1}(\nabla)$ is the holonomy of the connection $\nabla$ along the closed oriented 1-manifold $M_1$ (i.e., the usual holonomy along a copy of $S^1$, extended monoidally using the fact that $M_1$ is a disjoint union of finitely many copies of $S^1$), and $F_\nabla$ is the curvature 2-form of $\nabla$. The fact that $Z_{\mathrm{hol}}$ is a TQFT is encoded in the fundamental integral identity relating holonomy along the boundary and curvature in the interior: 
\[
\mathrm{hol}_{\partial W_2}(\nabla_{W_2})=\exp\left( \int_{W_2}F_{\nabla_{W_2}}\right),
\]
{\color{black}see, e.g., \cite{konrad-qualcosa}}.
\end{example}
\begin{remark}
The stack $\mathbf{B}\mathrm{U}(1)_{\nabla}$ can be seen as the stack associated by the Dold-Kan correspondence with the chain complex of sheaves  
\[
C^\infty(-;\mathrm{U}(1))\xrightarrow{\frac{1}{2\pi i}\mathrm{d}\mathrm{log}} \Omega^1(-),
\]
with $C^\infty(-;\mathrm{U}(1))$ in degree 1. This description precisely encodes the fact that the local data for a principal $\mathrm{U}(1)$-bundle with connection $(P,\nabla)$  over a smooth manifold $M$ endowed with an open cover $\mathcal{U}=\{U_\alpha\}_\alpha\in I$ are given by
\begin{itemize}
    \item smooth maps $g_{\alpha\beta}\colon U_{\alpha\beta}\to \mathrm{U}(1)$
    \item 1-forms $A_\alpha$ on $U_\alpha$
\end{itemize}
such that
\begin{itemize}
    \item $g_{\alpha\beta} g_{\beta\gamma} g_{\gamma\alpha}=1$ on $U_{\alpha\beta\gamma}$;
    \item $A_\beta-A_\alpha=\frac{1}{2\pi i}\mathrm{d}\mathrm{log}(g_{\alpha\beta})$ on $U_{\alpha\beta}$.
\end{itemize}
If $M$ is a smooth manifold, the set of homotopy classes of maps from $M$ to $\mathbf{B}\mathrm{U}(1)_{\nabla}$ classifies the isomorphism classes of principal $\mathrm{U}(1)$-bundles with connection over $M$, so that
\[
[M,\conn{\B \mathrm{U}(1)}]\cong \hat{H}^2(M;\mathbb{Z}),
\]
where $\hat{H}^n(-;\mathbb{Z})$ denotes degree $n$ ordinary differential cohomology (or Deligne cohomology), see, e.g., \cite{bryl,FSS}. In terms of the Dold-Kan correspondence, this reduces to the usual definition of ordinary differential cohomology as an hypercohomology:
\[
\hat{H}^n(M;\mathbb{Z})\cong \mathbb{H}^n(M;C^\infty(-;\mathrm{U}(1))\to \Omega^1)
\cong \mathbb{H}^n(M;\mathbb{Z}\to\Omega^0\to \Omega^1).
\]
\end{remark}
{\color{black}
\begin{remark}
  By forgetting the connection, one has a forgetful morphism $\mathbf{B}\mathrm{U}(1)_{\nabla}\to \mathbf{B}\mathrm{U}(1)$. It is a nice exercise, possibly  helpful in getting familiar with homotopy pullbacks of smooth stacks, to show that we have a homotopy pullback   
  \[
\begin{tikzcd}
	{\Omega^1} & {\conn{\mathbf{B}\mathrm{U}(1)}} \\
	{\ast} & {\mathbf{B}\mathrm{U}(1),}
	\arrow[from=1-1, to=2-1]
	\arrow[from=2-1, to=2-2]
	\arrow[from=1-2, to=2-2]
	\arrow[from=1-1, to=1-2]
		\arrow[phantom, "\usebox\pullback" , very near start, color=black,from=1-1, to=2-2]
\end{tikzcd}
\]
where $\ast\to \mathbf{B}\mathrm{U}(1)$ is the map picking the trivial $U(1)$-bundle. In terms of local data $(g_{\alpha\beta},A_\alpha)$ for a principal $U(1)$-bundle with connection on a manifold $M$, this corresponds to the fact that for any choice of a trivialization $(h_\alpha)$ of the cocycle $(g_{\alpha\beta})$, i.e., for any choice of smooth functions $h_\alpha\colon U_\alpha\to \mathrm{U}(1)$ such that $g_{\alpha\beta}=h_{\beta}{h_\alpha}^{-1}$, the local 1-forms $A_\alpha-\frac{1}{2\pi i}\mathrm{d}\mathrm{log}(h_{\alpha})$ define a global 1-form on $M$.
\end{remark}
}
\begin{remark}
A second, equivalent, description of the stack $\mathbf{B}\mathrm{U}(1)_{\nabla}$ is as the homotopy pullback
\[
\begin{tikzcd}
	{\conn{\mathbf{B}\mathrm{U}(1)}} & {\Omega^{2}_{cl}} \\
	{\mathbf{B}^{2}\Z} & {\mathbf{B}^{2}\R,}
	\arrow[from=1-1, to=2-1]
	\arrow[from=2-1, to=2-2]
	\arrow[from=1-2, to=2-2]
	\arrow[from=1-1, to=1-2,"\frac{1}{2\pi i}F"]
		\arrow[phantom, "\usebox\pullback" , very near start, color=black,from=1-1, to=2-2]
\end{tikzcd}
\]
where $F$ is the map sending  $\mathrm{U}(1)$-connection to its curvature and $\Omega^2_{cl}\to \mathbf{B}\R$ is actually the span
\[
\Omega^2_{cl}\to \left(\Omega^0\to\Omega^1\to \Omega^2_{cl}\right)\xleftarrow{\sim}\mathbf{B}\R, 
\]
and is the map inducing at the cohomology level the morphism $\Omega^2_{cl}(M)\to H^2_{\mathrm{dR}}(M)\cong H^2(M;\mathbb{R})$ mapping a closed 2-form to its de Rham cohomology class. This description is the one that directly encodes the naive idea of an ordinary differential cohomology class as the datum of integral cohomology class together with a closed differential form representing it, see, e.g., \cite[Proposition 3.24]{qualcosadibunke}. 
\end{remark}

\begin{example}[Higher holonomies and curvatures]\label{ex:high-hol-curv}
Example \ref{ex:hol-curv} admits an immediate higher dimensional generalization. One considers the higher stack $\conn{\mathbf{B}^n\mathrm{U}(1)}$ of $\mathrm{U}(1)$-$n$-principal bundles with connection, that can be equivalently defined 
as the homotopy pullback 
\[
\begin{tikzcd}
	{\conn{\mathbf{B}^n\mathrm{U}(1)}} & {\Omega^{n+1}_{cl}} \\
	{\mathbf{B}^{n+1}\Z} & {\mathbf{B}^{n+1}\R}
	\arrow[from=1-1, to=2-1]
	\arrow[from=2-1, to=2-2]
	\arrow[from=1-2, to=2-2]
	\arrow[from=1-1, to=1-2,"\frac{1}{2\pi i}F"]
		\arrow[phantom, "\usebox\pullback" , very near start, color=black,from=1-1, to=2-2]
\end{tikzcd}
\]
or via the Dold-Kan correspondence, as the stack associated by the $n$-terms Deligne complex
\[
C^\infty(-,\mathrm{U}(1))\xrightarrow{\frac{1}{2i\pi}\mathrm{d}\mathrm{log}}\Omega^1\xrightarrow{\mathrm{d}}\Omega^2\to\cdots\xrightarrow{\mathrm{d}}\Omega^n,
\]
and defines an $(n+1,n)$-dimensional TQFT as
   \begin{align*}
 Z_{\mathrm{hol}}^{(n)}\colon \Bord_{n+1,n}^{\mathrm{or}}(\conn{\mathbf{B}^n\mathrm{U}(1)})&\to \exp(2\pi i -)^\otimes\\
(M_{n},P_{M_n},\nabla_{M_n})&\mapsto \mathrm{hol}_{M_n}(\nabla^{}_{M_n})\\
(W_{n+1},P_{W_{n+1}},\nabla_{W_{n+1}})&\mapsto \frac{1}{2\pi i}\int_{W_{n+1}}F_{\nabla^{}_{W_{n+1}}}.
\end{align*}
\begin{remark}\label{rem:curvature-and-deRham}
The morphism of chain complexes
\[
\begin{tikzcd}
	0 & 0 & \cdots & 0 & \Omega^n \\
	C^\infty(-,\mathrm{U}(1)) & \Omega^1 &\cdots & \Omega^{n-1} & \Omega^n
	\arrow[from=1-1, to=1-2]
	\arrow[from=1-2, to=1-3]
	\arrow[from=1-3, to=1-4]
	\arrow[from=1-4, to=1-5]
	\arrow[from=2-1, to=2-2,"\frac{1}{2\pi i}\mathrm{d}\mathrm{log}"]
    \arrow[from=2-2, to=2-3,"\mathrm{d}"]
	\arrow[from=2-3, to=2-4]
	\arrow[from=2-4, to=2-5,"\mathrm{d}"]
	\arrow[from=1-1, to=2-1]
	\arrow[from=1-2, to=2-2]
	\arrow[from=1-3, to=2-3]
	\arrow[from=1-4, to=2-4]
	\arrow[from=1-5, to=2-5,"\mathrm{id}"]
\end{tikzcd}
\]
induces a morphism of smooth stacks $\Omega^n\to\conn{\mathbf{B}^n\mathrm{U}(1)}$ interpreting $n$-forms as particular connections on trivial $\mathrm{U}(1)$-$n$-bundles. For these particular connections, holonomy along a closed oriented $n$-manifold reduces to integration. More precisely, we have a commutative diagram
\[
\begin{tikzcd}
	{[M_n,\Omega^n]} & {[M_n,\conn{\mathbf{B}^n\mathrm{U}(1)}]} \\
	\mathbb{R} & {\mathrm{U}(1).}
	\arrow[from=1-1, to=2-1,"\int"']
	\arrow[from=1-2, to=2-2,"\mathrm{hol}"]
	\arrow[from=2-1, to=2-2,"\exp(2\pi i-)"]
	\arrow[from=1-1, to=1-2]
\end{tikzcd}
\]
Notice how the additivity of the integral is translated into the multiplicativity of holonomy by the group homomorphism $\exp(2\pi i-)$. Moreover, for $n$-forms seen as particular connections, the curvature is identified with the de Rham differential; namely, we have  a
commutative diagram
\[
\begin{tikzcd}
	{\Omega^n} & {\Omega^{n+1}_{cl}} \\
	{\conn{\mathbf{B}^n\mathrm{U}(1)}}. & 
	\arrow[from=1-1, to=2-1]
	\arrow[from=2-1, to=1-2,"\frac{1}{2\pi i}F"']
	\arrow[from=1-1, to=1-2,"\mathrm{d}"]
\end{tikzcd}
\]
\end{remark}
\end{example}
With our last Example in this Section we finally connect to spin structures.
\begin{example}[TQFTs from spin connections]\label{ex:tqft-spin-conn}
Let $\mathrm{BSpin}=\varinjlim \mathrm{BSpin}(n)$ be the classifying space for the infinite spin group. One has $H^4(\mathrm{BSpin};\Z)\cong \Z$, with a generator given by the first fractional Pontryagin class $\frac{1}{2}p_1$. This can equivalently be seen as a map (well defined up to homotopy) $\frac{1}{2}p_1:\mathrm{BSpin}\to K(\Z,4)$. Also the Eilenberg-MacLane space $K(\Z,4)$ can be seen as a classifying space: one has $K(\Z,4)\cong B^3\mathrm{U}(1)$, i.e.,  $K(\Z,4)$ is the classifying space for $\mathrm{U}(1)$-principal 3-bundles. Brylinksi and McLaughlin  show in \cite{BML} how one can construct  a \v{C}ech cocycle representative for $\frac{1}{2}p_1$ starting from a \v{C}ech cocycle representative for a principal Spin bundle. This construction can be reinterpreted as a refinement of the map of classifying spaces  $\frac{1}{2}p_1:\mathrm{BSpin}\to B^3\mathrm{U}(1)$ to a morphism of smooth stacks
\[
\frac{1}{2}\mathbf{p}_1:\mathbf{B}\mathrm{Spin}\to \mathbf{B}^3\mathrm{U}(1).
\]
Brylinski and McLaughlin also show how from the cocycle data for a spin connection on a principal Spin bundle one can construct a Deligne cocycle representative for  an ordinary differential cohomology class $\frac{1}{2}\hat{p}_1$ lifting $\frac{1}{2}p_1$ to  differential cohomology. This too admits a  natural interpretation in terms of smooth stacks: it is a lifting of $\frac{1}{2}\mathbf{p}_1$ to a morphism of smooth stacks $\frac{1}{2}\hat{\mathbf{p}}_1:\conn{\mathbf{B}\mathrm{Spin}}\to \conn{\mathbf{B}^3\mathrm{U}(1)}$, i.e., we have a
 commutative diagram
 \begin{equation}\label{diag:spin-conn}
\begin{tikzcd}
    \conn{\mathbf{B}\mathrm{Spin}}\arrow[r,"\frac{1}{2}\hat{\mathbf{p}_1}"]\arrow[d]&\conn{\mathbf{B}^3\mathrm{U}(1)}\arrow[d]\\
    \mathbf{B}\mathrm{Spin}\arrow[r,"\frac{1}{2}\mathbf{p}_1"]&\mathbf{B}^3\mathrm{U}(1),
\end{tikzcd}
 \end{equation}
Composing with the curvature morphism $\frac{1}{2\pi i}F\colon \conn{\mathbf{B}^3U(1)}\to\Omega^4_{cl}$ from Example \ref{ex:high-hol-curv}, we obtain the commutative diagram
\begin{equation}\label{diag:CW}
\begin{tikzcd}
\conn{\mathbf{B}\mathrm{Spin}} \arrow[r, "\frac{1}{2}\hat{\mathbf{p}}_1"'] \arrow[d] \arrow[rr, "\frac{1}{2}\mathbf{p}_1^{\mathrm{CW}}", bend left=15] & \conn{\B^3\mathrm{U}(1)} \arrow[d] \arrow[r, "\frac{1}{2\pi i}F"'] & \Omega_{cl}^4 \\
\mathbf{B}\mathrm{Spin} \arrow[r, "\frac{1}{2}\mathbf{p}_1"']                                                                      & {\mathbf{B}^3\mathrm{U}(1),}                               &              
\end{tikzcd}    
\end{equation}
where the bent arrow on the top is the Chern-Weil representative for $\frac{1}{2}p_1$ seen as a morphism of smooth stacks (see Remark \ref{rem:chern-weil}). Therefore, we obtain a $(4,3)$-dimensional TQFT with background fields given by spin connections and target $\exp(2\pi i -)^\otimes$ by setting
  \begin{align*}
 Z_{\mathrm{Spin}}\colon \Bord_{4,3}^{\mathrm{or}}(\conn{\mathbf{B}\mathrm{Spin}})&\to \exp(2\pi i -)^\otimes\\
(M_{3},P_{M_3},\nabla^{}_{M_3})&\mapsto \mathrm{hol}_{M_3}\left(\frac{1}{2}\hat{\mathbf{p}}_1(\nabla^{}_{M_3})\right)\\
(W_{4},P_{W_4},\nabla^{}_{W_4})&\mapsto \int_{W_{4}}\frac{1}{2}\mathbf{p}_1^{\mathrm{CW}}(\nabla^{}_{W_4}).
\end{align*}
\end{example}

\section{Geometric string structures}
In this Section we recall Waldorf's notion of a geometric string structure \cite{waldorf}. We present it by using the language of smooth stacks. Doing this, on the one hand the presentation is  extremely fast, and on the other hand all the main features of geometric string structures are very easily derived.

To begin with, recall that the classifying space of the string group is given, essentially by definition, by the homotopy fiber of the first fractional Pontryagin class. In other words $\mathrm{BString}$ is defined as the homotopy fiber product of spaces
\[
\begin{tikzcd}
	{\mathrm{BString}} & \ast \\
	{\mathrm{BSpin}} & {K(\mathbb{Z},4).}
	\arrow[from=1-1, to=2-1]
	\arrow[from=2-1, to=2-2,"\frac{1}{2}p_1"]
	\arrow[from=1-2, to=2-2]
	\arrow[from=1-1, to=1-2]
		\arrow[phantom, "\usebox\pullback" , very near start, color=black,from=1-1, to=2-2]
\end{tikzcd}
\]
We have already remarked that $K(\mathbb{Z},4)\cong B^3\mathrm{U}(1)$, so that $\mathrm{BString}$ is equivalently defined as the homotopy fiber of $\frac{1}{2}p_1\colon  \mathrm{BSpin}\to B^3\mathrm{U}(1)$. We have also recalled that, seen this way, the first fractional Pontryagin class can be refined to a morphism of smooth stacks $\frac{1}{2}\mathbf{p}_1:\mathbf{B}\mathrm{Spin}\to \mathbf{B}^3\mathrm{U}(1)$. This naturally leads to defining the smooth stack of principal string bundles as the homotopy pullback of smooth stacks
\[
\begin{tikzcd}
	{\B \mathrm{String}} & \ast \\
	{\B \mathrm{Spin}} & {\B^3\mathrm{U}(1)}.
	\arrow[from=1-1, to=2-1]
	\arrow[from=2-1, to=2-2,"\frac{1}{2}\mathbf{p}_1"]
	\arrow[from=1-2, to=2-2]
	\arrow[from=1-1, to=1-2]
		\arrow[phantom, "\usebox\pullback" , very near start, color=black,from=1-1, to=2-2]
\end{tikzcd}
\]
Explicitly, this means that a principal string bundle over a manifold $M$ is the datum of  principal spin bundle $P$ over $M$ \emph{together} with the choice of a trivialization of the associated principal $\mathrm{U}(1)$-3-bundle. Notice that this contains both a topological condition (the $\mathrm{U}(1)$-3-bundle has to be trivializable, and this is equivalent to $\frac{1}{2}p_1(P)=0\in H^4(M;\mathbb{Z})$) and additional structure (the choice of a trivialization).

We also noticed that the morphism of stacks $\frac{1}{2}\mathbf{p}_1:\mathbf{B}\mathrm{Spin}\to \mathbf{B}^3\mathrm{U}(1)$ admits a refinement to a morphism of stacks $\frac{1}{2}\hat{\mathbf{p}}_1:\conn{\mathbf{B}\mathrm{Spin}}\to \conn{\mathbf{B}^3\mathrm{U}(1)}$, and this leads to defining the stack of principal string connections as the homotopy pullback 
\[
\begin{tikzcd}
	{\conn{\B \mathrm{String}}} & \ast \\
	{\conn{\B \mathrm{Spin}}} & {\conn{\B^3\mathrm{U}(1)}}
	\arrow[from=1-1, to=2-1]
	\arrow[from=2-1, to=2-2,"\frac{1}{2}\hat{\mathbf{p}}_1"]
	\arrow[from=1-2, to=2-2]
	\arrow[from=1-1, to=1-2]
		\arrow[phantom, "\usebox\pullback" , very near start, color=black,from=1-1, to=2-2]
\end{tikzcd}.
\]
Yet, it is not $\conn{\B \mathrm{String}}$ the stack we are interested here, but a variant of it that has a ``more topological'' nature as we are going to explain in Proposition \ref{prop:geom-string-structures-exist} and in Remark \ref{rem:do-string-connections-exist?}. To begin with, notice that the sequence of forgetful morphisms of chain complexes 
\[
\begin{tikzcd}
{C^\infty(-;\mathrm{U}(1))} & {\Omega^1} & {\Omega^2} & {\Omega^3}\\
{C^\infty(-;\mathrm{U}(1))} & {\Omega^1} & {\Omega^2} & 0\\
{C^\infty(-;\mathrm{U}(1))} & {\Omega^1} & 0 & 0\\
{C^\infty(-;\mathrm{U}(1))} & 0 & 0 & 0
	\arrow[from=1-1, to=1-2, ,"\frac{1}{2\pi i}\mathrm{d}\mathrm{log}"]
    \arrow[from=1-2, to=1-3,"\mathrm{d}"]
    \arrow[from=1-3, to=1-4,"\mathrm{d}"]
    \arrow[from=2-1, to=2-2,"\frac{1}{2\pi i}\mathrm{d}\mathrm{log}"]
    \arrow[from=2-2, to=2-3,"\mathrm{d}"]
    \arrow[from=2-3, to=2-4]
    \arrow[from=3-1, to=3-2,"\frac{1}{2\pi i}\mathrm{d}\mathrm{log}"]
    \arrow[from=3-2, to=3-3]
    \arrow[from=3-3, to=3-4]
    \arrow[from=4-1, to=4-2]
    \arrow[from=4-2, to=4-3]
    \arrow[from=4-3, to=4-4]
    \arrow[equal,from=1-1, to=2-1]
    \arrow[equal,from=1-2, to=2-2]
    \arrow[equal,from=1-3, to=2-3]
    \arrow[from=1-4, to=2-4]
    \arrow[equal,from=2-1, to=3-1]
    \arrow[equal,from=2-2, to=3-2]
    \arrow[from=2-3, to=3-3]
    \arrow[equal,from=2-4, to=3-4]
    \arrow[equal,from=3-1, to=4-1]
    \arrow[from=3-2, to=4-2]
    \arrow[equal,from=3-3, to=4-3]
    \arrow[equal,from=3-4, to=4-4]
\end{tikzcd}
\]
induces the sequence of morphisms of smooth stacks
\[
\conn{\B^3\mathrm{U}(1)}\to \B\conn{\B^2\mathrm{U}(1)} \to \B^2\conn{\B \mathrm{U}(1)}\to \B^3\mathrm{U}(1),
\]
{\color{black}where we have written $\B^i\conn{\B^j\mathrm{U}(1)}$ for  $\B^i(\conn{\B^j\mathrm{U}(1)})$.
}
Precomposing this on the left with $\frac{1}{2}\hat{\mathbf{p}}_1:\conn{\mathbf{B}\mathrm{Spin}}\to \conn{\mathbf{B}^3\mathrm{U}(1)}$ we obtain maps
\[
\frac{1}{2}\hat{\mathbf{p}}_1^{(i)}\colon \conn{\mathbf{B}\mathrm{Spin}}\to \B^{3-i}\conn{\B^{i}\mathrm{U}(1)},
\]
for $i=0,\dots,3$. 
\begin{definition}\label{def:geom-string-struct}
For $i=0,\dots,3$, the smooth stack $\conn{\B \mathrm{String}^{(i)}}$ is defined as the homotopy pullback
\[
\begin{tikzcd}
	{\conn{\B \mathrm{String}^{(i)}}} & \ast \\
	{\conn{\B \mathrm{Spin}}} & {\B^{3-i}\conn{\B^{i}\mathrm{U}(1)}}
	\arrow[from=1-1, to=2-1]
	\arrow[from=2-1, to=2-2,"\frac{1}{2}\hat{\mathbf{p}}_1^{(i)}"]
	\arrow[from=1-2, to=2-2]
	\arrow[from=1-1, to=1-2]
		\arrow[phantom, "\usebox\pullback" , very near start, color=black,from=1-1, to=2-2]
\end{tikzcd}.
\]
The stack $\conn{\B \mathrm{String}^{(2)}}$ will be called the stack of \emph{geometric string structures}.
\end{definition}
\begin{remark}
In terms of Definition \ref{def:geom-string-struct}, the stack $\conn{\B \mathrm{String}}$ of principal string connections is $\conn{\B \mathrm{String}^{(3)}}$ and the morphism $\frac{1}{2}\hat{\mathbf{p}}_1$ is $\frac{1}{2}\hat{\mathbf{p}}_1^{(3)}$. If $M$ is a smooth manifold, then a map $M\to \conn{\B \mathrm{String}^{(3)}}$ consists in the datum of a principal spin connection on $M$ together with a trivialization of the associated $\mathrm{U}(1)$-3-connection. On the other hand, a map $M\to \conn{\B \mathrm{String}^{(0)}}$ consists in the datum of a principal spin connection on $M$ together with a trivialization of the associated $\mathrm{U}(1)$-3-bundle, entirely forgetting the connection data. A geometric string structure on $M$ lies between these two extremes: it consists into a principal spin connection on $M$ together with a trivialization of part of the data of the associated $\mathrm{U}(1)$-3-connection.
\end{remark}
{\color{black}
\begin{remark}
 Twisted versions of the stacks $\conn{\B \mathrm{String}^{(i)}}$, with a topological twist coming from a principal $E_8$-bundle, have been considered in \cite{FSS1,FSS2} in connection with the geometry of $C$-field configurations in M-theory.
\end{remark}
}\begin{remark}\label{rem:loop-space-torsor}
By definition, the datum of a geometric string structure on a manifold $M$ is equivalently the datum of a homotopy commutative diagram
\[
\begin{tikzcd}
	{M} & \ast \\
	{\conn{\B \mathrm{Spin}}} & {\B\conn{\B^{2}\mathrm{U}(1)}}
	\arrow[from=1-1, to=2-1]
	\arrow[from=2-1, to=2-2,"\frac{1}{2}\hat{\mathbf{p}}_1^{(2)}"]
	\arrow[from=1-2, to=2-2]
	\arrow[from=1-1, to=1-2]
\end{tikzcd}.
\]
As a consequence, \emph{two} geometric string structures on $M$ provide the fillers for the homotopy commutative diagram 
\[
\begin{tikzcd}
	& {M} &  \\
	\ast &{\conn{\B \mathrm{Spin}}} & \ast\\
	&{\B\conn{\B^{2}\mathrm{U}(1)}}&
	\arrow[from=1-2, to=2-2]
	\arrow[from=2-2, to=3-2,"\frac{1}{2}\hat{\mathbf{p}}_1^{(2)}"]
	\arrow[from=1-2, to=2-1,bend right=15]
	\arrow[from=1-2, to=2-3,bend left=15]
	\arrow[from=2-1, to=3-2,bend right=15]
	\arrow[from=2-3, to=3-2,bend left=15]
\end{tikzcd}.
\]
This shows that the first geometric string structure is changed into the second by the action given by pasting a homotopy commutative diagram of the form \[
\begin{tikzcd}
	& {M} &  \\
	\ast & & \ast\\
	&{\B\conn{\B^{2}\mathrm{U}(1)}}&
	\arrow[from=1-2, to=2-1,bend right=15]
	\arrow[from=1-2, to=2-3,bend left=15]
	\arrow[from=2-1, to=3-2,bend right=15]
	\arrow[from=2-3, to=3-2,bend left=15]
\end{tikzcd}.
\]
By definition of based loop space of a pointed stack, such a diagram is equivalent to a map from $M$ to the based loop space of $\B\conn{\B^{2}\mathrm{U}(1)}$, i.e., to $\conn{\B^{2}\mathrm{U}(1)}$. i.e., we have a unique factorization (with unique fillers, and where all the uniqueness' are up to unique higher homotopies, etc.) 
\[\begin{tikzcd}
	& M \\
	& {\conn{\B^{2}\mathrm{U}(1)}} \\
	\ast && \ast \\
	& {{\B\conn{\B^{2}\mathrm{U}(1)}}}
	\arrow[from=1-2, to=2-2]
	\arrow[from=2-2, to=3-1]
	\arrow[from=3-1, to=4-2]
	\arrow[from=2-2, to=3-3]
	\arrow[from=3-3, to=4-2]
	\arrow[curve={height=18pt}, from=1-2, to=3-1]
	\arrow[curve={height=-18pt}, from=1-2, to=3-3]
	\arrow["\usebox\pullback"{anchor=center, pos=0.125, rotate=-45}, draw=none, from=2-2, to=4-2]
\end{tikzcd}
\]
This shows that equivalence classes of geometric string structures on $M$ are a torsor for $[M,\conn{\B^2 \mathrm{U}(1)}]\cong \hat{H}^3(M;\mathbb{Z})$.
\end{remark}

\begin{remark}\label{rem:forget-conn}
The sequence of forgetful morphisms $\B^{3-i}\conn{\B^{i}\mathrm{U}(1)}\to \B^{4-i}\conn{\B^{i-1}\mathrm{U}(1)}$ refines the commutative diagram 
 \[
\begin{tikzcd}
    \conn{\mathbf{B}\mathrm{Spin}}\arrow[r,"\frac{1}{2}\hat{\mathbf{p}_1}"]\arrow[d]&\conn{\mathbf{B}^3\mathrm{U}(1)}\arrow[d]\\
    \mathbf{B}\mathrm{Spin}\arrow[r,"\frac{1}{2}\mathbf{p}_1"]&\mathbf{B}^3\mathrm{U}(1)
\end{tikzcd}
 \]
 to a commutative diagram
\[
\begin{tikzcd}[column sep=tiny]
	{\conn{\B \mathrm{Spin}}} & {\conn{\B^3\mathrm{U}(1)}} & {\B\conn{\B^2\mathrm{U}(1)}} & {\B^2\conn{\B \mathrm{U}(1)}} & {\B^3\mathrm{U}(1)} \\
	{\B \mathrm{Spin}} &&&& {\B^3\mathrm{U}(1)}
	\arrow["{\frac{1}{2}\mathbf{p}_1}", from=2-1, to=2-5]
	\arrow["{\frac{1}{2}\hat{\mathbf{p}_1}}", from=1-1, to=1-2]
	\arrow[from=1-2, to=1-3]
	\arrow[from=1-3, to=1-4]
	\arrow[from=1-4, to=1-5]
	\arrow[from=1-1, to=2-1]
    \arrow[equal,from=1-5, to=2-5]
\end{tikzcd}    
\]
\end{remark}
\begin{remark}\label{rem:omega3-and-p1}
By the pasting law for homotopy pullbacks,\footnote{I.e., by the fact that in a homotopy commutative diagram of the form
\[
\begin{tikzcd}[ampersand replacement=\&]
	\bullet \& \bullet \& \bullet \\
\bullet \& \bullet \& \bullet
	\arrow[ from=1-1, to=1-2]
	\arrow[ from=1-2, to=1-3]
	\arrow[ from=2-1, to=2-2]
	\arrow[ from=2-2, to=2-3]
 \arrow[ from=1-1, to=2-1]
	\arrow[ from=1-2, to=2-2]
 \arrow[ from=1-3, to=2-3]
\end{tikzcd}    
\]
where the right square is a pullback, the total rectangle is a pullback if and only if the left square is a pullback.
}
the defining diagram for the stack of geometric string structures can be factored as
\[
\begin{tikzcd}
	{\conn{\B \mathrm{String}^{(2)}}} & \Omega^3 & \ast \\
	{\conn{\B \mathrm{Spin}}} & {\conn{\B^{3}\mathrm{U}(1)}} & {\B\conn{\B^{2}\mathrm{U}(1)}}
	\arrow[from=1-1, to=2-1]
	\arrow[from=2-1, to=2-2,"\frac{1}{2}\hat{\mathbf{p}}_1"]
	\arrow[from=2-2, to=2-3]
	\arrow[from=1-2, to=2-2]
	\arrow[from=1-3, to=2-3]
	\arrow[from=1-1, to=1-2, "\omega_3"]
	\arrow[from=1-2, to=1-3]
		\arrow[phantom, "\usebox\pullback" , very near start, color=black,from=1-1, to=2-2]
		\arrow[phantom, "\usebox\pullback" , very near start, color=black,from=1-2, to=2-3]
\end{tikzcd}.
\]
where both squares are homotopy pullbacks. This in particular shows that a geometric string structure comes equipped with a canonical 3-form. From Remark \ref{rem:curvature-and-deRham} and Example \ref{ex:tqft-spin-conn}
we obtain the commutative diagram
\[
\begin{tikzcd}
	{\conn{\B \mathrm{String}^{(2)}}} & \Omega^3 & \\
	{\conn{\B \mathrm{Spin}}} & {\conn{\B^{3}\mathrm{U}(1)}} & {\Omega_{cl}^4}
	\arrow[from=1-1, to=2-1]
	\arrow[from=2-1, to=2-2,"\frac{1}{2}\hat{\mathbf{p}}_1"]
	\arrow[from=2-2, to=2-3,"\frac{1}{2\pi i}F"]
	\arrow[from=1-2, to=2-2]
	\arrow[from=1-1, to=1-2, "\omega_3"]
		\arrow[phantom, "\usebox\pullback" , very near start, color=black,from=1-1, to=2-2]
    \arrow[from=1-2, to=2-3,"\mathrm{d}",bend left=15]
    \arrow[from=2-1, to=2-3,, "\frac{1}{2}\mathbf{p}_1^{\mathrm{CW}}"', bend right=15]
\end{tikzcd}
\]
showing that, if $\omega_{3,M}$ is the canonical 3-form on a smooth manifold $M$ equipped with a geometric string structure and $\nabla$ is the underlying spin connection, then one has
\[
\mathrm{d}\omega_{3,M}=\frac{1}{2}\mathbf{p}_1^{\mathrm{CW}}(\nabla),
\]
i.e., $\omega_{3,M}$ is a trivialization of the Chern-Weil de Rham representative of the first fractional Pontryagin class of the principal spin bundle $P$ on $M$ coming with the choice of a spin connection on $P$.
\end{remark}

\begin{lemma}\label{lem:forget-string}
The sequence of forgetful morphisms $\B^{3-i}\conn{\B^{i}\mathrm{U}(1)}\to \B^{4-i}\conn{\B^{i-1}\mathrm{U}(1)}$ induces a sequence of forgetful morphisms $\conn{\B \mathrm{String}^{(i)}}\to \conn{\B \mathrm{String}^{(i-1)}}$ for $i=1,\dots,3$.
\end{lemma}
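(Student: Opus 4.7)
The plan is to invoke the universal property of homotopy pullbacks: since $\conn{\B \mathrm{String}^{(i)}}$ is by definition (Definition~\ref{def:geom-string-struct}) the homotopy pullback of a cospan, to produce a morphism from it into another homotopy pullback $\conn{\B \mathrm{String}^{(i-1)}}$ it suffices to produce a morphism between the two defining cospans.

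First I would observe that the morphism $\frac{1}{2}\hat{\mathbf{p}}_1^{(i)}\colon \conn{\mathbf{B}\mathrm{Spin}}\to \B^{3-i}\conn{\B^{i}\mathrm{U}(1)}$ was defined in the paragraph preceding Definition~\ref{def:geom-string-struct} as the composition of $\frac{1}{2}\hat{\mathbf{p}}_1=\frac{1}{2}\hat{\mathbf{p}}_1^{(3)}$ with the chain of forgetful maps $\conn{\B^3\mathrm{U}(1)}\to \B\conn{\B^2\mathrm{U}(1)}\to \B^2\conn{\B \mathrm{U}(1)}\to \B^3\mathrm{U}(1)$. In particular this chain factors through $\frac{1}{2}\hat{\mathbf{p}}_1^{(i)}$ for every $i$, so we have a canonical homotopy commutative triangle
\[
\begin{tikzcd}
\conn{\B\mathrm{Spin}} \ar[r, "\frac{1}{2}\hat{\mathbf{p}}_1^{(i)}"] \ar[rd, "\frac{1}{2}\hat{\mathbf{p}}_1^{(i-1)}"'] & \B^{3-i}\conn{\B^{i}\mathrm{U}(1)} \ar[d] \\
& \B^{4-i}\conn{\B^{i-1}\mathrm{U}(1)}.
\end{tikzcd}
\]
Combining this with the obvious commutative square $\ast=\ast\to \B^{3-i}\conn{\B^{i}\mathrm{U}(1)}\to \B^{4-i}\conn{\B^{i-1}\mathrm{U}(1)}$, we obtain a morphism of cospans from the cospan defining $\conn{\B \mathrm{String}^{(i)}}$ to the cospan defining $\conn{\B \mathrm{String}^{(i-1)}}$, with the identity on the two non-apex corners $\conn{\B\mathrm{Spin}}$ and $\ast$.

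Second, by the functoriality of the homotopy pullback with respect to morphisms of cospans, this morphism of cospans induces a morphism $\conn{\B \mathrm{String}^{(i)}}\to \conn{\B \mathrm{String}^{(i-1)}}$ of apices, fitting in a commutative cube whose bottom and top faces are the defining pullback squares. Concretely, the universal property applied to the outer commutative square
\[
\begin{tikzcd}
\conn{\B \mathrm{String}^{(i)}} \ar[r] \ar[d] & \ast \ar[d] \\
\conn{\B\mathrm{Spin}} \ar[r, "\frac{1}{2}\hat{\mathbf{p}}_1^{(i-1)}"'] & \B^{4-i}\conn{\B^{i-1}\mathrm{U}(1)}
\end{tikzcd}
\]
obtained by postcomposing the defining square of $\conn{\B \mathrm{String}^{(i)}}$ with the forgetful morphism produces the desired map into $\conn{\B \mathrm{String}^{(i-1)}}$, unique up to a contractible choice of fillers. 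There is essentially no obstacle here: the entire content of the lemma is that the compatibility triangle of the $\frac{1}{2}\hat{\mathbf{p}}_1^{(i)}$'s holds by construction, and everything else is a formal consequence of the 2-functoriality of homotopy pullbacks.
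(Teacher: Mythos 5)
Your proof is correct. Both you and the paper ultimately rest on the factorization $\frac{1}{2}\hat{\mathbf{p}}_1^{(i-1)} = (\text{forgetful})\circ \frac{1}{2}\hat{\mathbf{p}}_1^{(i)}$ and the universal property of homotopy pullbacks, but the route is genuinely different. You apply the universal property directly to a morphism of cospans (identity on the non-apex corners), which is the minimal argument needed for the statement as written. The paper instead invokes the pasting law to build, starting from the bottom-right corner, a $5\times 5$ grid of homotopy pullback squares; the left column then exhibits the $\conn{\B\mathrm{String}^{(i)}}$ as iterated pullbacks, and the forgetful maps are simply read off as the vertical arrows. The paper's heavier construction buys something extra: the grid identifies the fibers of the forgetful morphisms $\conn{\B\mathrm{String}^{(i)}}\to\conn{\B\mathrm{String}^{(i-1)}}$ as $\Omega^3$, $\B\Omega^2$, $\B^2\Omega^1$ respectively, and this identification is exactly what is used in the obstruction-theoretic argument of Proposition~\ref{prop:geom-string-structures-exist} (lifting obstructions live in $[M,\B^2\Omega^1]$, $[M,\B\Omega^2]$, etc.). So the paper front-loads more work into this lemma to reuse it later, while your argument is tighter if one only cares about the existence of the forgetful morphisms themselves.
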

\begin{proof}
By the pasting law for homotopy pullbacks, {\color{black} starting with the bottom right corner, one builds} the following homotopy commutative diagram, where each square is a homotopy pullback, and where the $\mathcal{S}_i$ are stacks we give no particular name since we are not specifically interested in them:
\[
\begin{tikzcd}[column sep=tiny]
	{\conn{\B \mathrm{String}^{(3)}}} & \ast \\
	{\conn{\B \mathrm{String}^{(2)}}} & {\Omega^3} & \ast \\
	{\conn{\B \mathrm{String}^{(1)}}} & {\mathcal{S}_3} & {\B\Omega^2} & \ast \\
	{\conn{\B \mathrm{String}^{(0)}}} & {\mathcal{S}_2} & {\mathcal{S}_1} & {\B^2\Omega^1} & \ast \\
	{\conn{\B \mathrm{Spin}}} & {\conn{\B^3\mathrm{U}(1)}} & {\B\conn{\B^2\mathrm{U}(1)}} & {\B^2\conn{\B \mathrm{U}(1)}} & {\B^3\mathrm{U}(1)} 
	\arrow[from=4-1, to=5-1]
	\arrow[from=3-1, to=4-1]
	\arrow[from=2-1, to=3-1]
	\arrow[from=1-1, to=2-1]
	\arrow[from=1-1, to=1-2]
	\arrow[from=2-1, to=2-2]
	\arrow[from=1-2, to=2-2]
	\arrow[from=2-2, to=2-3]
	\arrow["{\frac{1}{2}\hat{\mathbf{p}_1}}", from=5-1, to=5-2]
	\arrow[from=5-2, to=5-3]
	\arrow[from=5-3, to=5-4]
	\arrow[from=5-4, to=5-5]
	\arrow[from=4-4, to=5-4]
	\arrow[from=3-3, to=3-4]
	\arrow[from=3-4, to=4-4]
	\arrow[from=2-3, to=3-3]
	\arrow[from=4-5, to=5-5]
	\arrow[from=4-4, to=4-5]
	\arrow[from=4-1, to=4-2]
	\arrow[from=4-3, to=4-4]
	\arrow[from=3-1, to=3-2]
	\arrow[from=3-2, to=3-3]
	\arrow[from=2-2, to=3-2]
	\arrow[from=3-2, to=4-2]
	\arrow[from=3-3, to=4-3]
	\arrow[from=4-2, to=4-3]
	\arrow[from=4-2, to=5-2]
	\arrow[from=4-3, to=5-3]
		\arrow[phantom, "\usebox\pullback" , very near start, color=black,from=1-1, to=2-2]
		\arrow[phantom, "\usebox\pullback" , very near start, color=black,from=2-1, to=3-2]
		\arrow[phantom, "\usebox\pullback" , very near start, color=black,from=3-1, to=4-2]
		\arrow[phantom, "\usebox\pullback" , very near start, color=black,from=4-1, to=5-2]
		\arrow[phantom, "\usebox\pullback" , very near start, color=black,from=2-2, to=3-3]
		\arrow[phantom, "\usebox\pullback" , very near start, color=black,from=3-2, to=4-3]
		\arrow[phantom, "\usebox\pullback" , very near start, color=black,from=4-2, to=5-3]
		\arrow[phantom, "\usebox\pullback" , very near start, color=black,from=3-3, to=4-4]
		\arrow[phantom, "\usebox\pullback" , very near start, color=black,from=4-3, to=5-4]
		\arrow[phantom, "\usebox\pullback" , very near start, color=black,from=4-4, to=5-5]
\end{tikzcd}    
\]
\end{proof}

\begin{lemma}\label{lem:forget-connection}
Forgetting the connection data induces a natural homotopy commutative diagram
 \[
\begin{tikzcd}
    {\conn{\B \mathrm{String}^{(0)}}}\arrow[r]\arrow[d]&\conn{\mathbf{B}\mathrm{Spin}}\arrow[d]\\
    {\B \mathrm{String}}\arrow[r]&{\B \mathrm{Spin}}
\end{tikzcd}
 \]
\end{lemma}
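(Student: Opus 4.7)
The plan is to invoke the universal property of the homotopy pullback defining $\B\mathrm{String}$. The first step is to observe, using Remark \ref{rem:forget-conn}, that the composite
\[
\conn{\B\mathrm{Spin}}\xrightarrow{\frac{1}{2}\hat{\mathbf{p}}_1}\conn{\B^{3}\mathrm{U}(1)}\to \B^{3}\mathrm{U}(1),
\]
which by construction equals $\frac{1}{2}\hat{\mathbf{p}}_1^{(0)}$ (cf.\ the definition of $\frac{1}{2}\hat{\mathbf{p}}_1^{(i)}$ preceding Definition \ref{def:geom-string-struct}), factors canonically as
\[
\conn{\B\mathrm{Spin}}\to \B\mathrm{Spin}\xrightarrow{\frac{1}{2}\mathbf{p}_1}\B^{3}\mathrm{U}(1),
\]
the first arrow being the forgetful morphism. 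This is precisely the commutativity of the outer rectangle of the diagram in Remark \ref{rem:forget-conn} after composing all the way to the rightmost column.

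Next, I would package this identification as a morphism of cospans of smooth stacks from $\bigl(\ast\to \B^{3}\mathrm{U}(1)\leftarrow \conn{\B\mathrm{Spin}}\bigr)$ to $\bigl(\ast\to \B^{3}\mathrm{U}(1)\leftarrow \B\mathrm{Spin}\bigr)$, whose three components are the identity on $\ast$, the identity on $\B^{3}\mathrm{U}(1)$, and the forgetful morphism $\conn{\B\mathrm{Spin}}\to \B\mathrm{Spin}$. Applying the homotopy pullback functor and invoking Definition \ref{def:geom-string-struct} with $i=0$ on the source side, together with the defining pullback of $\B\mathrm{String}$ on the target side, I obtain the sought left vertical morphism $\conn{\B\mathrm{String}^{(0)}}\to \B\mathrm{String}$. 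By the naturality of the universal property, the resulting square is homotopy commutative, and its right vertical arrow is, by construction, the canonical forgetful $\conn{\B\mathrm{Spin}}\to \B\mathrm{Spin}$, so the statement follows.

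I do not anticipate any real obstacle here: once the factorization of $\frac{1}{2}\hat{\mathbf{p}}_1^{(0)}$ through $\B\mathrm{Spin}$ has been isolated, which is essentially already contained in Remark \ref{rem:forget-conn}, the remainder is a purely formal consequence of the functoriality of homotopy pullbacks with respect to morphisms of cospans. The only mild care needed is to treat the structural fillers coherently, in the spirit of the convention announced at the end of the Notation and Conventions paragraph.
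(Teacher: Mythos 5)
Your proposal is correct and is essentially the same argument as the paper's own proof: both rely on the factorization of $\frac{1}{2}\hat{\mathbf{p}}_1^{(0)}$ through $\B\mathrm{Spin}$ provided by Remark~\ref{rem:forget-conn}, combined with the defining pullback squares for $\conn{\B\mathrm{String}^{(0)}}$ and $\B\mathrm{String}$; the paper phrases the final step as invoking the universal property on the cone obtained by stacking the two squares, while you phrase it as functoriality of homotopy pullbacks applied to a morphism of cospans, which is the same thing.
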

\begin{proof}
From Definition \ref{def:geom-string-struct} and Remark \ref{rem:forget-conn} we have the homotopy commutative diagram
\[
\begin{tikzcd}
	{\conn{\B \mathrm{String}^{(0)}}} & \ast \\
	{\conn{\B \mathrm{Spin}}} & {\B^{3}\mathrm{U}(1)}\\
	{\B \mathrm{Spin}} &{\B^{3}\mathrm{U}(1)}
	\arrow[from=1-1, to=2-1]
	\arrow[from=2-1, to=2-2,"\frac{1}{2}\hat{\mathbf{p}}_1^{(0)}"]
	\arrow[from=1-2, to=2-2]
	\arrow[from=1-1, to=1-2]
	\arrow[from=2-1, to=3-1]
	\arrow[from=3-1, to=3-2,"\frac{1}{2}{\mathbf{p}}_1"]
	\arrow[equal,from=2-2, to=3-2]
		\arrow[phantom, "\usebox\pullback" , very near start, color=black,from=1-1, to=2-2]
\end{tikzcd}.
\]
Forgetting the middle line and recalling the definition of $\B \mathrm{String}$ and the universal property of homotopy pullbacks, we get a homotopy commutative diagram
\[
\begin{tikzcd}
	{\conn{\B \mathrm{String}^{(0)}}} & &\ast \\
	{\conn{\B \mathrm{Spin}}} & {\B \mathrm{String}} &\\
	{\B \mathrm{Spin}} & &{\B^{3}\mathrm{U}(1)}
	\arrow[from=1-1, to=2-1]
	\arrow[from=1-1, to=2-2]
	\arrow[from=1-3, to=3-3]
	\arrow[from=1-1, to=1-3]
	\arrow[from=2-1, to=3-1]
	\arrow[from=2-2, to=1-3, bend right=15]
	\arrow[from=2-2, to=3-1, bend right=15]
	\arrow[from=3-1, to=3-3,"\frac{1}{2}{\mathbf{p}}_1"]
		\arrow[phantom, "\usebox\pullback" , very near start, color=black,from=2-2, to=3-3]
\end{tikzcd}
\]
which in particular in its leftmost part gives the statement.
\end{proof}
\begin{proposition}\label{prop:geom-string-structures-exist}
    Let $M$ be a smooth manifold, and let $P\colon M\to \B \mathrm{Spin}$ be a principal spin bundle on $M$. Then $P$ can be enhanced to a geometric string structure on $M$ if and only if $\frac{1}{2}p_1(P)=0$.
\end{proposition}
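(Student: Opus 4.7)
The key tool is the factored homotopy pullback from Remark~\ref{rem:omega3-and-p1}, which presents $\conn{\B \mathrm{String}^{(2)}}$ as the homotopy pullback of $\conn{\B\mathrm{Spin}}$ and $\Omega^3$ over $\conn{\B^3\mathrm{U}(1)}$. Combined with the standard short exact sequence for ordinary differential cohomology, this will translate the existence of a geometric string structure refining $P$ into the vanishing of an integer cohomology class.

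For the ``only if'' direction, composing the forgetful morphism $\conn{\B \mathrm{String}^{(2)}} \to \conn{\B \mathrm{String}^{(0)}}$ of Lemma~\ref{lem:forget-string} with the morphism $\conn{\B \mathrm{String}^{(0)}} \to \B\mathrm{String}$ of Lemma~\ref{lem:forget-connection}, any geometric string structure on $M$ refining $P$ descends to a purely topological string structure, i.e., a lift of $P\colon M \to \B\mathrm{Spin}$ through $\B\mathrm{String} \to \B\mathrm{Spin}$. Since $\B\mathrm{String}$ is by definition the homotopy fiber of $\frac{1}{2}\mathbf{p}_1$, the existence of such a lift is equivalent to the vanishing of $\frac{1}{2}p_1(P) \in [M,\B^3\mathrm{U}(1)]\cong H^4(M;\mathbb{Z})$.

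For the ``if'' direction, I would first pick any spin connection $\nabla$ on $P$, which exists by the standard partition-of-unity argument. By the universal property of the left pullback square in Remark~\ref{rem:omega3-and-p1}, an enhancement of $(P,\nabla)$ to a geometric string structure amounts to producing a 3-form $\omega_3 \in \Omega^3(M)$ together with an equivalence, in $\conn{\B^3\mathrm{U}(1)}$, between $\omega_3$ (viewed as a connection on the trivial $\mathrm{U}(1)$-3-bundle via Remark~\ref{rem:curvature-and-deRham}) and $\frac{1}{2}\hat{\mathbf{p}}_1(\nabla)$. Passing to isomorphism classes via $\pi_0\mathrm{Maps}(M,\conn{\B^3\mathrm{U}(1)})\cong \hat{H}^4(M;\mathbb{Z})$, this amounts to requiring that the class $\frac{1}{2}\hat{p}_1(\nabla)\in \hat{H}^4(M;\mathbb{Z})$ lie in the image of $\Omega^3(M)\to \hat{H}^4(M;\mathbb{Z})$. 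The standard exact sequence
\[
\Omega^3(M) \longrightarrow \hat{H}^4(M;\mathbb{Z}) \longrightarrow H^4(M;\mathbb{Z}) \longrightarrow 0,
\]
whose right-hand map assigns to a differential cohomology class its underlying integer class, identifies this image with $\ker\bigl(\hat{H}^4(M;\mathbb{Z}) \to H^4(M;\mathbb{Z})\bigr)$. Since the underlying integer class of $\frac{1}{2}\hat{p}_1(\nabla)$ is exactly $\frac{1}{2}p_1(P)$, which vanishes by hypothesis, a suitable $\omega_3$ exists.

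The only nontrivial conceptual step is the identification of the space of enhancements of a fixed $(P,\nabla)$ to a geometric string structure with the homotopy fiber of $\Omega^3(M) \to \mathrm{Maps}(M,\conn{\B^3\mathrm{U}(1)})$ over $\frac{1}{2}\hat{\mathbf{p}}_1(\nabla)$; once this translation is in place, the rest is a mechanical application of the exact sequence above. A secondary point worth spelling out is that the assumption is independent of the auxiliary choice of $\nabla$, so that the ``if'' direction indeed produces an enhancement of $P$ itself, as claimed.
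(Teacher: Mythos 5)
Your proof is correct and both directions are sound, but the ``if'' direction follows a genuinely different route from the paper's. The paper climbs the tower $\conn{\B \mathrm{String}^{(0)}} \to \conn{\B \mathrm{String}^{(1)}} \to \conn{\B \mathrm{String}^{(2)}}$ one step at a time, reading off the obstructions to each lift from the homotopy pullback ladder in Lemma~\ref{lem:forget-string}: first an element of $[M,\B^3\mathrm{U}(1)]\cong H^4(M;\Z)$ (which vanishes by hypothesis), then elements of $[M,\B^2\Omega^1]\cong H^2(M;\Omega^1)$ and $[M,\B\Omega^2]\cong H^1(M;\Omega^2)$, which vanish because $\Omega^1$ and $\Omega^2$ are fine sheaves. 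You instead work with the single homotopy pullback of Remark~\ref{rem:omega3-and-p1} presenting $\conn{\B \mathrm{String}^{(2)}}$ over $\conn{\B \mathrm{Spin}}$ and $\Omega^3$, reduce the existence of a lift of $(P,\nabla)$ to a $\pi_0$ statement, and invoke the curvature short exact sequence for ordinary differential cohomology. The two arguments are of course close cousins --- the vanishing of $H^p(M;\Omega^q)$ for $p>0$ is exactly what makes the differential cohomology sequence exact --- but your packaging is more conceptual and shorter, at the cost of black-boxing the exact sequence, whereas the paper's is more self-contained and hands-on (and visibly exhibits the lifting data at each stage, which is useful elsewhere in the paper, e.g.\ in the discussion of Remark~\ref{rem:do-string-connections-exist?}). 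The ``only if'' direction is essentially identical in both. One small tightening to your closing remark: the hypothesis $\frac{1}{2}p_1(P)=0$ visibly involves only $P$, so independence of the auxiliary $\nabla$ is immediate; what your argument really shows is that the integral class underlying $\frac{1}{2}\hat{\mathbf{p}}_1(\nabla)$ is $\frac{1}{2}p_1(P)$ for \emph{every} $\nabla$, so the lifting criterion you derive is $\nabla$-independent.
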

\begin{proof}
By Lemma \ref{lem:forget-string} and Lemma \ref{lem:forget-connection}, a lift of $P$ to a geometric string structure induces in particular a lift of $P$ to a string bundle, i.e., to a morphism $\tilde{P}\colon M\to \B \mathrm{String}$. By definition of $\B \mathrm{String}$ this is equivalent to a trivialization of $\frac{1}{2}\mathbf{p}_1(P)$, and so implies $\frac{1}{2}p_1(P)=0$ in $H^4(M;\mathbb{Z})$. Vice versa, 
if $\frac{1}{2}p_1(P)=0$ in $H^4(M;\mathbb{Z})$ then $\frac{1}{2}\mathbf{p}_1(P)$ is homotopically trivial in $[M,\B^3\mathrm{U}(1)]$. Since every spin bundle admits a spin connection, we can lift $P$ to a principal spin connection $(P,\nabla)\colon M \to \conn{\B \mathrm{Spin}}$. By Remark \ref{rem:forget-conn}, we have $\frac{1}{2}\mathbf{p}_1^{(0)}(P,\nabla)\simeq \frac{1}{2}\mathbf{p}_1(P)$,  and so $\frac{1}{2}\mathbf{p}_1^{(0)}(P,\nabla)$ is homotopically trivial in $[M,\B^3\mathrm{U}(1)]$. By Definition \ref{def:geom-string-struct} this means that $(P,\nabla)$ can be lifted to a morphism $\eta^{(0)}\colon M\to \conn{\B \mathrm{String}}^{(0)}$. From the proof of Lemma \ref{lem:forget-string} we see that the obstruction to lifting $\eta^{(0)}$ to a morphism $\eta^{(1)}\colon M\to \conn{\B \mathrm{String}}^{(1)}$ is given by an element in $[M,\B^2\Omega^1]\cong H^2(M,\Omega^1)=0$, where the latter equality follows from the fact that $\Omega^1$ is a fine sheaf on $M$. Similarly, the obstruction to lifting $\eta^{(1)}$ to a morphism $\eta^{(2)}\colon M\to \conn{\B \mathrm{String}}^{(2)}$ is given by an element in $[M,\B\Omega^2]\cong H^1(M,\Omega^2)=0$. Therefore we see there is no obstruction to enhancing $P$ to a geometric string structure.
\end{proof}

\begin{remark}\label{rem:do-string-connections-exist?}
Since $H^0(M,\Omega^3)=\Omega^3(M)$ is nonzero, the argument in the proof of Proposition \ref{prop:geom-string-structures-exist} can't be used to show that a string bundle on $M$ always admit a string connection. At the same time, it does not really prevent such a connection from existing: the argument shows that for a particular choice of a geometric string structure lifting the topological structure of the string bundle, there could be a nonzero obstruction (the canonical 3-form of the geometric string structure) to an actual string connection. But it is still possible that changing the spin connection on the underlying spin bundle and its lift to a geometric string structure, this obstruction may vanish. The question whether a string bundle  always admits a string connection or not\footnote{Or, more generally, whether a principal $G$ bundle for $G$ a Lie 2-group always admits a $G$-connection as it is the case for ordinary Lie groups.} is, to our knowledge, undecided at the moment and there is no consensus on which answer should be expected, see, e.g., the discussion at \url{https://mathoverflow.net/q/426197}.
\end{remark}

\section{Morphisms of morphisms of abelian groups and homotopy fibers}
{\color{black}We collect here some technical facts on the essential fibers of monoidal functors associated with commutative diagrams of abelian groups, to be used in Section \ref{sec:bnr} where we prove our main result.}
In Section \ref{sec:cat-from-mor} we have seen how one can associate a rigid symmetric monoidal category to a morphism of abelian groups. This construction is actually functorial. To see this, recall that morphisms of abelian groups are the objects of a category {\color{black}$\mathsf{Arr}(\mathsf{Ab})$}, whose morphisms are commutative diagrams. That is, if $\varphi_H\colon H_{\mathrm{mor}} \to H_{\mathrm{ob}}$ and $\varphi_G\colon G_{\mathrm{mor}}\to G_{\mathrm{ob}}$ are objects of $\mathsf{Arr}(\mathsf{Ab})$, i.e., morphisms of abelian groups, then a morphism from $\varphi_H$ to $\varphi_G$ in $\mathsf{Arr}(\mathsf{Ab})$ is a commutative diagram of the form  
\[
\begin{tikzcd}
    H_{\mathrm{mor}}\arrow[r,"\phi_H"]\arrow[d,"f_{\mathrm{mor}}"']&H_{\mathrm{ob}}\arrow[d,"f_{\mathrm{ob}}"]\\
    G_{\mathrm{mor}}\arrow[r,"\phi_G"]&G_{\mathrm{ob}}
\end{tikzcd}
\]
The pair $(f_{\mathrm{ob}},f_{\mathrm{mor}})$ defines a symmetric monoidal functor $f\colon \phi_H^\otimes \to \phi_G^\otimes$, and one easily checks that the construction is functorial, {\color{black}i.e., it defines a functor\footnote{\color{black}Strictly speaking this is a 2-functor from the category $\mathsf{Arr}(\mathsf{Ab})$ seen as a 2-category with trivial 2-morphisms, to the 2-category $\mathsf{SMC}$. One could naturally enhance $\mathsf{Arr}(\mathsf{Ab})$ by adding to it nontrivial 2-morphisms/homotopies. We are not going to do this in full generality, but glimpses of this possibility appear in Lemma \ref{lem:hofiber-and-ker} and in Remark \ref{rem:hofiber-and-ker}.} 
\[
\mathsf{Arr}(\mathsf{Ab})\to \mathsf{SMC},
\]
where $\mathsf{SMC}$ is the 2-category of (small) symmetric monoidal categories with symmetric monoidal functors and symmetric monoidal natural transformations.}
We recall the following
\begin{definition}
Given a functor $p:\mathcal{D}\to \mathcal{C}$ and an object $c$ of $\mathcal{C}$, the \emph{homotopy fiber} (or  \emph{essential fiber}) of $p$ over $c$ is the category $\hofib{p;c}$ with objects the pairs $(x,b)$ with $x$ an object in $\mathcal{D}$ and $b\in \hom_{\mathcal{C}}(c,p(x))$ an isomorphism; morphisms from $(x,b)$ to $(x',b')$ in $\hofib{p;c}$ are those morphisms $a:x\to x'$ in $\mathcal{D}$ such that the diagram
\[
\begin{tikzcd}[sep= tiny, ampersand replacement=\&]
    p(x)\arrow[rr, "p(a)"]\& \&p(x')\\
    \& c \arrow[ul, "b"] \arrow[ur, "b'"']\&
\end{tikzcd}
\]
commutes.
By relaxing the condition that $b$ is an isomorphism, and allowing it to be an arbitrary morphism, we obtain the notion of \emph{lax homotopy fiber} and denote it by $\hofiblax{p;c}$.
\end{definition}
{\color{black}
\begin{remark}The use of the term ``homotopy fiber'' to denote the essential fiber of a functor can be given a rigorous justification in terms of the canonical model structure on the 2-category of all categories in a fixed topos, see \cite{joyal-tierney:Strong-Stacks-and-Classifying-Spaces}
\end{remark}
}
When $p:\mathcal{D}\to \mathcal{C}$ is a monoidal functor between monoidal categories, we will always take $c$ to be the monoidal unit $\mathbf{1}_\mathcal{C}$ of $\mathcal{C}$, and will simply write $\hofib{p}$ and $\hofiblax{p}$ for $\hofib{p;\mathbf{1}_\mathcal{C}}$ and $\hofiblax{p;\mathbf{1}_\mathcal{C}}$, respectively. It is immediate to see that if $p$ is a monoidal functor, the monoidal structures of $\mathcal{C}$ and $\mathcal{D}$ induce a natural monoidal category structure on $\hofib{p}$ and on $\hofiblax{p}$.

\begin{remark}
If $\mathcal{C}$ is a  groupoid, then there is no difference between the homotopy fiber of $p$ at $c$ and its lax homotopy fiber. This consideration in particular applies to the categories $\phi^\otimes$.
\end{remark}

For the monoidal functor $f\colon \phi_H^\otimes \to \phi_G^\otimes$ associated with a commutative diagram of abelian groups, we can give a simple explicit description of its homotopy fiber. This is provided by the following easy Lemma.
\begin{lemma}\label{lem:hofiber}
Let \[
\begin{tikzcd}
    H_{\mathrm{mor}}\arrow[r,"\phi_H"]\arrow[d,"f_{\mathrm{mor}}"']&H_{\mathrm{ob}}\arrow[d,"f_{\mathrm{ob}}"]\\
    G_{\mathrm{mor}}\arrow[r,"\phi_G"]&G_{\mathrm{ob}}
\end{tikzcd}
\]
be a commutative diagram of abelian groups, and let $f\colon \phi_H^\otimes \to \phi_G^\otimes$ be the associated monoidal functor. Then we have
\begin{align*}
    \mathrm{Ob}(\hofib{f})&=G_{\mathrm{mor}}\times_{G_{\mathrm{ob}}}H_{\mathrm{ob}}
    \\
    \mathrm{Mor}_{\hofib{f}}\left((g,h),(g',h')\right)&=\left\{ x\in H_{\mathrm{mor}}\,\text{ s.t. } \begin{cases} 
    f_{\mathrm{mor}}(x)=g'-g\\
    \phi_H(x)=h'-h
        \end{cases}\phantom{\int^\int_\int}\!\!\!\!\!\!
    \right\}.
\end{align*}
\end{lemma}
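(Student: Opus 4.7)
The statement is essentially a matter of unwinding the definition of essential fiber, once one recalls that the monoidal unit of $\phi_G^\otimes$ is $0\in G_{\mathrm{ob}}$. My first step is to observe that $\phi_H^\otimes$ and $\phi_G^\otimes$ are groupoids: by Definition \ref{def:phi-tens}, a morphism $x\in A_{\mathrm{mor}}$ from $a$ to $a+\phi(x)$ has two-sided inverse $-x$. Consequently, the isomorphism condition on the datum $b\colon 0\to f_{\mathrm{ob}}(h)$ in the definition of $\hofib{f}$ is automatic. Thus objects of $\hofib{f}$ are simply pairs $(h,g)$ with $h\in\mathsf{Ob}(\phi_H^\otimes)=H_{\mathrm{ob}}$ and $g\in\mathsf{Hom}_{\phi_G^\otimes}(0,f_{\mathrm{ob}}(h))$. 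By Definition \ref{def:phi-tens}, such a $g$ is an element of $G_{\mathrm{mor}}$ satisfying $0+\phi_G(g)=f_{\mathrm{ob}}(h)$, i.e., $\phi_G(g)=f_{\mathrm{ob}}(h)$; and this is precisely the defining condition for $(g,h)$ to lie in the pullback $G_{\mathrm{mor}}\times_{G_{\mathrm{ob}}}H_{\mathrm{ob}}$.

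My second step is to handle morphisms by the same unwinding. A morphism $(h,g)\to (h',g')$ in $\hofib{f}$ is, by definition, a morphism $a\colon h\to h'$ in $\phi_H^\otimes$ making the triangle over $0$ with legs $g$ and $g'$ commute. By Definition \ref{def:phi-tens}, $a$ is the datum of an element $x\in H_{\mathrm{mor}}$ with $\phi_H(x)=h'-h$, and its image $f(a)$ is the morphism $f_{\mathrm{ob}}(h)\to f_{\mathrm{ob}}(h')$ encoded by $f_{\mathrm{mor}}(x)\in G_{\mathrm{mor}}$. Commutativity of the triangle
\[
\begin{tikzcd}[sep=small, ampersand replacement=\&]
    f_{\mathrm{ob}}(h)\arrow[rr, "f_{\mathrm{mor}}(x)"]\& \&f_{\mathrm{ob}}(h')\\
    \& 0 \arrow[ul, "g"] \arrow[ur, "g'"']\&
\end{tikzcd}
\]
translates, via the fact that composition in $\phi_G^\otimes$ is addition in $G_{\mathrm{mor}}$, into the identity $g+f_{\mathrm{mor}}(x)=g'$, i.e., $f_{\mathrm{mor}}(x)=g'-g$. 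Together with $\phi_H(x)=h'-h$, this yields the claimed description of the Hom-sets.

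There is no serious obstacle; the one point worth double-checking is that the commutativity hypothesis $\phi_G\circ f_{\mathrm{mor}}=f_{\mathrm{ob}}\circ \phi_H$ is exactly what is needed to ensure that $f_{\mathrm{mor}}(x)$ really defines a morphism $f_{\mathrm{ob}}(h)\to f_{\mathrm{ob}}(h')$ in $\phi_G^\otimes$, i.e., that $\phi_G(f_{\mathrm{mor}}(x))=f_{\mathrm{ob}}(h')-f_{\mathrm{ob}}(h)$. This is the unique place the commutative-square hypothesis enters, and it guarantees that the monoidal functor $f$ is well defined in the first place, so verifying it is a consistency check rather than an additional argument.
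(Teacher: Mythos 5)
Your proof is correct and follows essentially the same route as the paper's: unwind the definition of $\hofib{f}$ together with Definition \ref{def:phi-tens}, using that the unit of $\phi_G^\otimes$ is $0\in G_{\mathrm{ob}}$ and that composition is addition in $G_{\mathrm{mor}}$. The only difference is cosmetic: you explicitly note that $\phi_G^\otimes$ is a groupoid (so the isomorphism condition on $g$ is vacuous) and flag the role of the commutativity hypothesis, both of which the paper leaves implicit.
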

\begin{proof}
By definition, an object in $\hofib{f}$ is a pair $(g,h)$ with $h$ an object in $\phi_H$ and $g$ an isomorphism from $\mathbf{1}_{\varphi_G}$ to $f(h)$ in $\phi_G$. Making this explicit, we see that $h\in H_{\mathrm{ob}}$, and $g\in G_{\mathrm{mor}}$ is such that $0_{G_\mathrm{ob}}+\phi_G(g)=f_{\mathrm{ob}}(h)$, i.e., $\phi_G(g)=f_{\mathrm{ob}}(h)$. This gives $\mathrm{Ob}(\hofib{f})=G_{\mathrm{mor}}\times_{G_{\mathrm{ob}}}H_{\mathrm{ob}}$. As far as concerns the morphisms, we have
\begin{align*}
\mathrm{Mor}_{\hofib{f}}\bigl((g,h),&(g',h')\bigr)=\left\{x:h\to h'\, \text{s.t.} 
\begin{tikzcd}[sep= tiny, ampersand replacement=\&]
    f_{\mathrm{ob}}(h)\arrow[rr, "f_{\mathrm{mor}}(x)"]\& \&f_{\mathrm{ob}}(h')\\
    \& 0_{G_\mathrm{ob}} \arrow[ul, "g"] \arrow[ur, "g'"']\&
\end{tikzcd}\text{ commutes}
\right\}\\
=&\lrb{x\in H_{\mathrm{mor}}\text{ s.t. }   f_{\mathrm{mor}}(x)+g=g'  \text{ and }h+\phi_H(x)=h' }\\
=&\lrb{x\in H_{\mathrm{mor}}\text{ s.t. } f_{\mathrm{mor}}(x)=g'-g  \text{ and } \phi_H(x)=h'-h }.
\end{align*}
\end{proof}

\begin{lemma}\label{lem:hofiber-and-ker}
A commutative diagram of abelian groups of the form
\[
\begin{tikzcd}
    H_{\mathrm{mor}}\arrow[r,"\phi_H"]\arrow[d,"f_{\mathrm{mor}}"']&H_{\mathrm{ob}}\arrow[d,"f_{\mathrm{ob}}"]\arrow[dl,"\lambda"']\\
    G_{\mathrm{mor}}\arrow[r,"\phi_G"]&G_{\mathrm{ob}}
\end{tikzcd}
\]
induces a symmetric monoidal functor
$\Xi_\lambda\colon\hofib{f}\to \ker(\phi_G)^\otimes$ acting on the objects as
$(g,h)\mapsto g-\lambda(h)$. Moreover, $\Xi_\lambda$ is an equivalence iff $\phi_H$ is an isomorphism. 
\end{lemma}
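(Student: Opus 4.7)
The plan is to use Lemma \ref{lem:hofiber} to make $\hofib{f}$ explicit and then verify everything by elementary diagram chasing. The given diagram encodes two relations: the lower triangle gives $f_{\mathrm{ob}}=\phi_G\circ\lambda$, and the upper triangle gives $f_{\mathrm{mor}}=\lambda\circ\phi_H$. Both are used in what follows.

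First I would check that $\Xi_\lambda$ is well defined on objects. An object of $\hofib{f}$ is a pair $(g,h)$ with $\phi_G(g)=f_{\mathrm{ob}}(h)$, and the lower triangle immediately gives $\phi_G(g-\lambda(h))=0$, so $g-\lambda(h)\in\ker(\phi_G)$. On morphisms, since $\ker(\phi_G)^\otimes$ has only identity arrows, I must verify that source and target of every $x\colon(g,h)\to(g',h')$ in $\hofib{f}$ have the same image under $\Xi_\lambda$; this uses the upper triangle via the one-line computation $g'-g=f_{\mathrm{mor}}(x)=\lambda(\phi_H(x))=\lambda(h'-h)$. Symmetric monoidality then follows at once from additivity of $\lambda$.

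For the equivalence criterion I would treat essential surjectivity, fullness, and faithfulness separately. Essential surjectivity is automatic, since $(k,0)\in\hofib{f}$ maps to $k$ for every $k\in\ker(\phi_G)$. For fullness and faithfulness the key observation is that, thanks to $f_{\mathrm{mor}}=\lambda\circ\phi_H$, the two conditions $\phi_H(x)=h'-h$ and $f_{\mathrm{mor}}(x)=g'-g$ appearing in Lemma \ref{lem:hofiber} collapse into the single condition $\phi_H(x)=h'-h$, provided that the endpoints really do have equal $\Xi_\lambda$-image, i.e.\ $g'-g=\lambda(h'-h)$. Hence the relevant hom-set is identified with the fiber of $\phi_H$ over $h'-h$: fullness then corresponds to surjectivity of $\phi_H$, and faithfulness to injectivity, which combine to the stated iff.

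The main subtlety, rather than a genuine obstacle, is bookkeeping: correctly peeling off the two commutativity relations encoded in the triangles and tracking how they interact with the pullback description of $\hofib{f}$ from Lemma \ref{lem:hofiber}. Once these identifications are in place, the whole statement reduces to elementary manipulations with kernels and fibers in $\mathsf{Ab}$.
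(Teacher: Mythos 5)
Your proposal is correct and follows essentially the same strategy as the paper's proof: unwind the explicit description of $\hofib{f}$ from Lemma \ref{lem:hofiber}, use the upper and lower triangles of the commutative diagram for well-definedness, and analyze hom-sets to characterize equivalence. The one organizational improvement is that you identify the hom-set $\mathrm{Mor}_{\hofib{f}}((g,h),(g',h'))$ (when $\Xi_\lambda$-images coincide) uniformly with the fiber $\phi_H^{-1}(h'-h)$, from which fullness~$\Leftrightarrow$~surjectivity and faithfulness~$\Leftrightarrow$~injectivity fall out simultaneously; the paper instead treats the two directions of the iff asymmetrically, proving "fully faithful~$\Rightarrow$~$\phi_H$ iso" via the specific witnesses $\ker(\phi_H)\subseteq \mathrm{Mor}((g,h),(g,h))$ and the pair $(0,0)$, $(\lambda(h),h)$, and then re-arguing the converse. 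To make your shortcut airtight you should record two small points that you leave implicit: first, every element of $H_{\mathrm{ob}}$ does arise as $h'-h$ for some pair of objects with equal $\Xi_\lambda$-image (take $(\lambda(h),h)$ and $(0,0)$), so surjectivity/injectivity over \emph{all} fibers is genuinely forced; second, when $\Xi_\lambda(g,h)\neq\Xi_\lambda(g',h')$ the source hom-set is automatically empty (any $x$ would force $g'-g=\lambda(h'-h)$), so nothing further needs to be checked in that case.
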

{\color{black}The last line in the statement of Lemma \ref{lem:hofiber-and-ker} may be a bit surprising at first, since $\Xi_\lambda$ depends on $\lambda$ whereas $\varphi_H$ does not. But actually, when $\varphi_H$ is an isomorphism, then $\lambda$ is unique, given by $f_{\mathrm{mor}}\circ \varphi_H^{-1}$.}  
\begin{proof}
To begin with, let us check that $\Xi_\lambda$ does indeed take its values in $\ker(\phi_G)^\otimes$. For $(g,h)$ an object in $\hofib{f}$, by Lemma \ref{lem:hofiber} we have
\[
\phi_G(\Xi_\lambda(g,h))=
\phi_G(g-\lambda(h))=\phi_G(g)-f_{\mathrm{ob}}(h)=0_{G_{\mathrm{ob}}}.
\]
Since  {\color{black}the only morphisms of $\ker(\phi_G)^\otimes$ are identities}, in order to show that $\Xi_\lambda$ is a functor we only need to show that a morphism
$x:(g,h)\to(g',h')$ in $\hofib{f}$ induces an identity $\Xi_\lambda(g,h)=\Xi_\lambda(g',h')$. From Lemma \ref{lem:hofiber} we know that $x$ satisfies the two identities $f_{\mathrm{mor}}(x)=g'-g$  and $\phi_H(x)=h'-h$. Since $f_{\mathrm{mor}}=\lambda\circ \phi_H$, the two identities together give \[
g'-g=f_{\mathrm{mor}}(x)=\lambda\left(\phi_H(x)\right)=\lambda(h'-h)=\lambda(h')-\lambda(h),
\]
i.e., $g-\lambda(h)=g'-\lambda(h')$
which is the desired identity. The monoidality of $\Xi_\lambda$ is manifest. 

To prove the second part of the statement, we begin by noticing that since $(g,0_{H_\mathrm{ob}})$ is mapped to $g$, the functor $\Xi_\lambda$ is always essentially surjective and so it is an equivalence iff it is fully faithful.
If $\Xi_\lambda$ is fully faithful, then we must have
\[
\mathrm{Mor}_{\hofib{f}}\bigl((g,h),(g,h)\bigr)=\{0_{H_\mathrm{mor}}\}.
\]
Since \[
\ker(\phi_H)\subseteq \ker(f_{\mathrm{mor}})\subseteq \mathrm{Mor}_{\hofib{f}}\bigl((g,h),(g,h)\bigr),
\]
this implies the injectivity of $\phi_H$. Since both $(0_{G_\mathrm{mor}}, 0_{H_\mathrm{ob}})$ and $(\lambda(h),h)$ are mapped by $\Xi_\lambda$ to $0_{G_\mathrm{mor}}$, fully faithfulness implies the existence of a morphism from $(0_{G_\mathrm{mor}}, 0_{H_\mathrm{ob}})$ to $(\lambda(h),h)$ in $\hofib{f}$, and by  Lemma \ref{lem:hofiber} such a morphism is in particular an element $x$ in $H_\mathrm{mor}$ such $\phi_H(x)=h$. This shows that $\phi_H$ is also surjective, and so an isomorphism. Vice versa, assume $\phi_H$ is an isomorphism, and let $(g,h)$ and $(g',h')$ be such that $\Xi_\lambda(g,h)=\Xi_\lambda(g',h')$. This is equivalent to $\lambda(h'-h)=g'-g$. Since  $\phi_H$ is an isomorphism there exists a unique element $x$ in $H_\mathrm{mor}$ such that $\phi_H(x)=h'-h$. We also have
\[
f_\mathrm{mor}(x)=\lambda\left(\phi_H(x)\right)=\lambda(h'-h)=g'-g
\]
so that this element $x$ is a morphism (necessarily unique) from $(g,h)$ to $(g',h')$ in $\hofib{f}$. Finally, let $(g,h)$ and $(g',h')$ be such that $\Xi_\lambda(g,h)\neq \Xi_\lambda(g',h')$. If  $\mathrm{Mor}_{\hofib{f}}\bigl((g,h),(g',h')\bigr)$ is nonempty, then also $\mathrm{Mor}_{\ker(\phi_G)^\otimes}(g-\lambda(h),g'-\lambda(h'))$ has to be nonempty, and so we get $g-\lambda(h)=g'-\lambda(h')$, a contradiction. This shows that if $\phi_H$ is an isomorphism then $\Xi_\lambda$ is an equivalence.
\end{proof}
{\color{black}
\begin{remark}\label{rem:hofiber-and-ker}
A more conceptual understanding of Lemma \ref{lem:hofiber-and-ker} is as follows. The category of morphism of abelian groups can be equivalently seen as the category of chain complexes of $\mathbb{Z}$-modules concentrated in degree $0$ and $1$ (with $A_{\mathrm{ob}}$ in degree 0 and $A_{\mathrm{mor}}$ in degree 1). In this context, the map $\lambda$ is a homotopy between the morphism of chain complexes $f=(f_{\mathrm{ob}},f_{\mathrm{mor}})$ and the zero morphism. The homotopy fiber of $f$ will then be equivalent to the homotopy fiber of the zero morphism, i.e., to the direct sum $H_\bullet\oplus\mathbf{\Omega}G_\bullet$, where $\mathbf{\Omega}G_\bullet$ is the loop space object of $G_\bullet$ in the category of chain complexes concentrated in degrees $0$ and $1$ (in the standard, i.e., projective, model structure). A representative for $\mathbf{\Omega}G_\bullet$ is nothing but the chain complex consisting of $\ker (\varphi_G)$ concentrated in degree $0$. Finally, the summand $H_\bullet$ is homotopically trivial iff it is acyclic, and so iff $\varphi_H$ is an isomorphism. 
\end{remark}}
\section{The Bunke--Naumann--Redden morphism}\label{sec:bnr}
{
{\color{black}
We can now show how the constructions and examples presented so far conspire to reproduce the Bunke--Naumann--Redden morphism from the Introduction.}
Since $\conn{\B \mathrm{String}}^{(2)}$ comes equipped with a morphism of smooth stacks
$
\conn{\B \mathrm{String}}^{(2)}\xrightarrow{\omega_3}\Omega^3
$
we have an associated symmetric monoidal functor $\Bord_{4,3}^{\mathrm{or}}(\conn{\B \mathrm{String}}^{(2)})\to \Bord_{4,3}^{\mathrm{or}}(\Omega^{3})$. Composing this with the $\mathrm{id}_{\mathbb{R}}^\otimes$-valued TQFT from Example \ref{ex:Stokes}, we obtain an $\mathrm{id}_{\mathbb{R}}^\otimes$-valued TQFT
\[
Z_{\mathrm{String}}\colon \Bord_{4,3}^{\mathrm{or}}(\conn{\B \mathrm{String}}^{(2)})\to \mathrm{id}_{\mathbb{R}}^\otimes.
\]
It maps a closed oriented 3-manifold $M_3$ equipped with a geometric string structure $\eta^{}_{M_3}$ to $\int_{M_3}\omega^\eta_{3;M_3}$, where $\omega^\eta_{3;M_3}$is the 3-form on $M_3$ associated with the geometric string structure, and it maps an oriented 4-manifold $W_4$ equipped with a geometric string structure to 
\[
\int_{W_4}d\omega^\eta_{3;W_4}=\frac{1}{2}\int_{W_4}\mathbf{p}_1^{\mathrm{CW}}(\nabla^{}_{W_4}),
\]
where the last identity is Remark \ref{rem:omega3-and-p1}. 

From Example \ref {ex:tqft-spin-conn} we have an $\exp(2\pi i -)^\otimes$-valued TQFT \[
Z_{\mathrm{Spin}}\colon \Bord_{4,3}^{\mathrm{or}}(\conn{\mathbf{B}\mathrm{Spin}})\to \exp(2\pi i -)^\otimes
\]
mapping 
a closed oriented 3-manifold $M_3$ equipped with a spin connection $(P_{M_3},\nabla^{}_{M_3})$  to $\mathrm{hol}_{M_3}\left(\frac{1}{2}\hat{\mathbf{p}}_1(\nabla^{}_{M_3})\right)$ and an oriented 4-manifold $W_4$ (equipped with a spin connection) to $\frac{1}{2}\int_{W_{4}}\mathbf{p}_1^{\mathrm{CW}}(\nabla^{}_{W_4})$. Moreover, the projection $\pi^{\mathrm{String}}_{\mathrm{Spin}}\colon\conn{\B \mathrm{String}}^{(2)}\to \conn{\B \mathrm{Spin}}$ induces a symmetric monoidal functor 
\[
\Bord_{4,3}^{\mathrm{or}}(\pi^{\mathrm{String}}_{\mathrm{Spin}})\colon \Bord_{4,3}^{\mathrm{or}}(\conn{\B \mathrm{String}}^{(2)})\to \Bord_{4,3}^{\mathrm{or}}(\conn{\B \mathrm{Spin}})
\]
and the commutative diagram of abelian groups
\[
\begin{tikzcd}
	\R && \R \\
	\R && {\mathrm{U}(1)}
	\arrow[from=1-1, to=1-3, "\mathrm{id}_\R"]
	\arrow[from=1-1, to=2-1, "\mathrm{id}_\R"']
	\arrow[from=2-1, to=2-3, "\exp(2\pi i-)"]
	\arrow[from=1-3, to=2-3, "\exp(2\pi i-)"]
\end{tikzcd}
\]
induces a symmetric monoidal functor 
\[
(\mathrm{id}_\R,\exp(2\pi i -))\colon\mathrm{id}_\R^\otimes \to\exp(2\pi i -)^\otimes,
\]
leading to the following result.
\begin{lemma}\label{lem:comm-diagram-symm-mon}
The diagram of symmetric monoidal functors
\[
\begin{tikzcd}
	\Bord_{4,3}^{\mathrm{or}}(\conn{\B \mathrm{String}}^{(2)}) && \mathrm{id}_\R^\otimes \\
	\Bord_{4,3}^{\mathrm{or}}(\conn{\B \mathrm{Spin}}) && {\exp(2\pi i -)^\otimes}
	\arrow[from=1-1, to=1-3, "Z_{\mathrm{String}}"]
	\arrow[from=1-1, to=2-1, "\Bord_{4,3}^{\mathrm{or}}(\pi^{\mathrm{String}}_{\mathrm{Spin}})"']
	\arrow[from=2-1, to=2-3, "Z_{\mathrm{Spin}}"]
	\arrow[from=1-3, to=2-3, "{(\mathrm{id}_\R,\exp(2\pi i -))}"]
\end{tikzcd}
\]
commutes, with identity 2-cell.
\end{lemma}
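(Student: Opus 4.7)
The plan is to verify commutativity of the square separately on objects and on morphisms of $\Bord_{4,3}^{\mathrm{or}}(\conn{\B \mathrm{String}}^{(2)})$, exploiting the factorization of $\omega_3$ through $\conn{\B^3\mathrm{U}(1)}$ coming from Remark \ref{rem:omega3-and-p1}. On both sides everything is expressible in terms of a single $\mathrm{U}(1)$-3-gerbe with connection, namely $\frac{1}{2}\hat{\mathbf{p}}_1(\nabla_{M_3})$, so the commutativity should reduce to standard compatibilities between integration and holonomy.

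First, I would spell out both paths on an object $(M_3,\eta^{}_{M_3})$. Along the top-right path one obtains $\exp\bigl(2\pi i \int_{M_3}\omega^\eta_{3;M_3}\bigr)$, while along the left-bottom path one forgets the string data down to the underlying spin connection $(P_{M_3},\nabla^{}_{M_3})$ and obtains $\mathrm{hol}_{M_3}\bigl(\frac{1}{2}\hat{\mathbf{p}}_1(\nabla^{}_{M_3})\bigr)$. The pasting square of Remark \ref{rem:omega3-and-p1} tells us that, as a $\mathrm{U}(1)$-3-connection, $\frac{1}{2}\hat{\mathbf{p}}_1(\nabla^{}_{M_3})$ is equivalent to $\omega^\eta_{3;M_3}$ viewed as an $n$-form connection via $\Omega^3\to\conn{\B^3\mathrm{U}(1)}$. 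Applying the commutative square in Remark \ref{rem:curvature-and-deRham} (the one relating integration of forms to holonomy of the associated flat connection through $\exp(2\pi i-)$) gives the required equality of the two sides.

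Next, on a morphism $W_4$, the top-right path gives $\int_{W_4}\mathrm{d}\omega^\eta_{3;W_4}$, which, by Remark \ref{rem:omega3-and-p1} (which identifies $\mathrm{d}\omega_3$ with $\frac{1}{2}\mathbf{p}_1^{\mathrm{CW}}(\nabla)$), equals $\frac{1}{2}\int_{W_4}\mathbf{p}_1^{\mathrm{CW}}(\nabla^{}_{W_4})$. The left-bottom path gives exactly the same expression directly from the definition of $Z_{\mathrm{Spin}}$, and the functor $(\mathrm{id}_\R,\exp(2\pi i-))$ is the identity on the morphism component. Hence the two composites agree on morphisms.

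Finally, I would observe that both compositions are genuine symmetric monoidal functors (not merely naturally isomorphic ones) because each building block is strictly monoidal: integration and holonomy are additive and multiplicative, respectively, under disjoint union, and the pullback of tangential and background structures is strict. Consequently the 2-cell filling the square is the identity, as claimed. The only point that requires some care, and which I expect to be the main subtlety, is the object-level identity: one must be honest that ``$\omega^\eta_{3;M_3}$ seen as a $\mathrm{U}(1)$-3-connection is equivalent to $\frac{1}{2}\hat{\mathbf{p}}_1(\nabla^{}_{M_3})$'' is an equivalence of objects in $\conn{\B^3\mathrm{U}(1)}(M_3)$ rather than an equality, so that the resulting equality of holonomies really follows from the invariance of holonomy under such equivalences.
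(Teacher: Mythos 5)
Your proposal is correct and is essentially the paper's own proof: the object-level equality $\exp\bigl(2\pi i\int_{M_3}\omega^\eta_{3;M_3}\bigr)=\mathrm{hol}_{M_3}\bigl(\tfrac{1}{2}\hat{\mathbf{p}}_1(\nabla_{M_3})\bigr)$ is obtained from the factorization of $\omega_3$ through $\conn{\B^3\mathrm{U}(1)}$ (Remark~\ref{rem:omega3-and-p1}) combined with the integration-vs-holonomy square of Remark~\ref{rem:curvature-and-deRham}, and the morphism-level equality is $\mathrm{d}\omega_3=\tfrac{1}{2}\mathbf{p}_1^{\mathrm{CW}}$, precisely as in the paper. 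Your closing caveat about the object-level identity being mediated by an equivalence in $\conn{\B^3\mathrm{U}(1)}(M_3)$ is a sensible precision, and since holonomy is invariant under such equivalences the argument goes through unchanged.
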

\begin{proof}
The commutativity of the diagram at the objects level is the identity
\[
\exp\biggl(2\pi i \int_{M_3}\omega^\eta_{3;M}\biggr)=\mathrm{hol}_{M_3}\left(\frac{1}{2}\hat{\mathbf{p}}_1(\nabla^{}_{M_3})\right)
\]
for a closed 3-manifold $M_3$ equipped with a geometric string structure. This is provided by Remark \ref{rem:curvature-and-deRham} {\color{black} together with the commutative diagram
\[
\begin{tikzcd}
	{[M_3,\conn{\B \mathrm{String}}^{(2)})]} && {[M_3,\conn{\B \mathrm{Spin}}]} \\
	{[M_3,\Omega^3]} && {[M_3,\mathbf{B}^3\conn{\mathrm{U}(1)}]}
	\arrow[from=1-1, to=1-3, ""]
	\arrow[from=1-1, to=2-1, "\omega_3"']
	\arrow[from=2-1, to=2-3, ""]
	\arrow[from=1-3, to=2-3, "\frac{1}{2}\hat{\mathbf{p}}_1"]
\end{tikzcd}
\]
coming from the definition of $\conn{\B \mathrm{String}}^{(2)}$}. {\color{black} The commutativity of the diagram at the morphisms level is the identity 
\[
\int_{W_4}d\omega^\eta_{3;W_4}=\frac{1}{2}\int_{W_4}\mathbf{p}_1^{\mathrm{CW}}(\nabla^{}_{W_4})
\]
recalled above.}
\end{proof}
By the naturality of the (lax) hofiber construction, and recalling that for a morphism of groupoids there is no distinction between the homotopy fiber and the lax homotopy fiber, Lemma \ref{lem:comm-diagram-symm-mon} implies we have a distinguished symmetric monoidal functor
\[
\hofiblax{\Bord_{4,3}^{\mathrm{or}}(\pi^{\mathrm{String}}_{\mathrm{Spin}})}\to\hofib{\mathrm{id}_\R,\exp(2\pi i -)}.
\]
Since the commutative diagram defining the morphism $(\mathrm{id}_\R,\exp(2\pi i -))$ factors as
\[
\begin{tikzcd}
	\R && \R \\
	\R && {\mathrm{U}(1)}
	\arrow[from=1-1, to=1-3, "\mathrm{id}_\R"]
	\arrow[from=1-1, to=2-1, "\mathrm{id}_\R"']
	\arrow[from=2-1, to=2-3, "\exp(2\pi i-)"']
	\arrow[from=1-3, to=2-3, "\exp(2\pi i-)"]
	\arrow[from=1-3, to=2-1, "\mathrm{id}_\R"']
\end{tikzcd},
\]
from Lemma \ref{lem:hofiber-and-ker} we have a symmetric monoidal equivalence 
\[
\Xi_{\mathrm{id}_\R}\colon \hofib{(\mathrm{id}_\R,\exp(2\pi i -))}\to \ker(\exp(2\pi i-)^\otimes) =\Z^\otimes,
\]
acting on $\mathrm{Ob}(\hofib{(\mathrm{id}_\R,\exp(2\pi i -))})=\R\times_{\mathrm{U}(1)}\R$ as
$(g,h)\mapsto g-h$. Putting everything together we obtain a symmetric monoidal functor
\[
Z^{\mathrm{String}}_{\mathrm{Spin}}\colon \hofiblax{\Bord_{4,3}^{\mathrm{or}}(\pi^{\mathrm{String}}_{\mathrm{Spin}})} \to \Z^\otimes
\]

\begin{proposition}\label{prop:main}
The symmetric monoidal functor $Z^{\mathrm{String}}_{\mathrm{Spin}}$ is the Bunke--Naumann--Redden map $\psi$ from the Introduction.
\end{proposition}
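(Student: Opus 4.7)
The plan is to unwind both sides and check they agree on objects; since $\Z^\otimes$ has only identity morphisms, a symmetric monoidal functor into $\Z^\otimes$ is determined by its value on objects. First I would unpack what an object of $\hofiblax{\Bord_{4,3}^{\mathrm{or}}(\pi^{\mathrm{String}}_{\mathrm{Spin}})}$ is. By definition, it is a pair $(x,b)$ where $x$ is an object of $\Bord_{4,3}^{\mathrm{or}}(\conn{\B\mathrm{String}^{(2)}})$ and $b\colon\emptyset\to \pi^{\mathrm{String}}_{\mathrm{Spin}}(x)$ is a morphism in $\Bord_{4,3}^{\mathrm{or}}(\conn{\B\mathrm{Spin}})$. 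Unpacking: $x$ is a closed oriented $3$-manifold $M_3$ equipped with a geometric string structure $\eta^{}_{M_3}$, and $b$ is an oriented spin $4$-manifold $(W_4,\nabla^{}_{W_4})$ with $\partial W_4=M_3$ whose spin connection restricts on the boundary to the spin connection underlying $\eta^{}_{M_3}$. These are precisely the BNR input data $(M_3,\eta^{}_{M_3},W_4,\nabla^{}_{W_4})$.

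Next I would trace $(x,b)$ through the composition defining $Z^{\mathrm{String}}_{\mathrm{Spin}}$. By the naturality of the lax-hofiber construction applied to the square of Lemma \ref{lem:comm-diagram-symm-mon}, the pair $(x,b)$ is sent to $(Z_{\mathrm{Spin}}(b),Z_{\mathrm{String}}(x))\in \R\times_{\mathrm{U}(1)}\R=\mathrm{Ob}(\hofib{(\mathrm{id}_\R,\exp(2\pi i-))})$. By Example \ref{ex:tqft-spin-conn} and Example \ref{ex:Stokes}, together with Remark \ref{rem:omega3-and-p1}, this pair is explicitly
\[
\Bigl(\tfrac{1}{2}\textstyle\int_{W_4}\mathbf{p}_1^{\mathrm{CW}}(\nabla^{}_{W_4}),\;\int_{M_3}\omega^\eta_{3;M_3}\Bigr),
\]
and Lemma \ref{lem:comm-diagram-symm-mon} is precisely what ensures it lies in the homotopy pullback. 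Finally, applying the equivalence $\Xi_{\mathrm{id}_\R}$ from Lemma \ref{lem:hofiber-and-ker}, which sends $(g,h)\mapsto g-h$, yields
\[
\tfrac{1}{2}\textstyle\int_{W_4}\mathbf{p}_1^{\mathrm{CW}}(\nabla^{}_{W_4})-\int_{M_3}\omega^\eta_{3;M_3}=\psi(M_3,\eta^{}_{M_3},W_4,\nabla^{}_{W_4}),
\]
as required.

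The only real obstacle is bookkeeping: one must verify that the lax-hofiber construction, when applied to a symmetric monoidal functor between bordism categories, genuinely produces the compatibility of boundary conditions demanded by the BNR definition (spin connection on $W_4$ restricting to the spin part of $\eta^{}_{M_3}$), and that the inherited monoidal structure reproduces additivity under disjoint unions. Both are immediate from the universal property of the lax hofiber and the monoidality of $\Bord_{4,3}^{\mathrm{or}}(\pi^{\mathrm{String}}_{\mathrm{Spin}})$, so once the identification of objects is in place the entire proof reduces to the chain of equalities displayed above.
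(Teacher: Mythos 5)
Your proof is correct and follows essentially the same route as the paper's: identify objects of the lax homotopy fiber with the BNR input quadruples, trace them through the hofiber naturality square to $(\tfrac{1}{2}\int_{W_4}\mathbf{p}_1^{\mathrm{CW}}(\nabla_{W_4}),\int_{M_3}\omega^\eta_{3;M_3})$, and apply $\Xi_{\mathrm{id}_\R}$. The explicit appeal to naturality of the lax-hofiber and the brief remark on monoidality are slightly more spelled out than in the paper, but the argument is the same.
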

\begin{proof}
 By the definition of lax homotopy fiber, an object in the source of $Z^{\mathrm{String}}_{\mathrm{Spin}}$ consists of an object in $\Bord_{4,3}^{\mathrm{or}}(\conn{\B \mathrm{String}}^{(2)})$, i.e., in a closed oriented 3-manifold $M_3$ equipped with a geometric string structure $\eta^{}_{M_3}$, together with a morphism in $\Bord_{4,3}^{\mathrm{or}}(\conn{\B \mathrm{Spin}})$ from the unit object (i.e., the empty manifold) to the spin manifold underlying $(M_3,\eta^{}_{M_3})$. This is precisely the datum of a spin 4-manifold $W_4$ with $\partial W_4=M_3$, equipped with a spin connection $\nabla^{}_{W_4}$ such that the restriction $\nabla^{}_{W_4}\bigr\vert_{M_3}$ coincides with the spin connection datum of the geometric string structure $\eta^{}_{M_3}$. In other words, the objects of $\hofiblax{\Bord_{4,3}^{\mathrm{or}}(\conn{\B \mathrm{String}}^{(2)})\to \Bord_{4,3}^{\mathrm{or}}(\conn{\B \mathrm{Spin}})}$ are precisely the geometric data of the Bunke--Naumann--Redden construction. Now, let us compute $Z^{\mathrm{String}}_{\mathrm{Spin}}(M_3,\eta^{}_{M_3},W_4,\nabla^{}_{W_4})$, where we see the quadruple $(M_3,\eta^{}_{M_3},W_4,\nabla^{}_{W_4})$ as a pair consisting of the object $(M_3,\eta^{}_{M_3})$ and of the morphism $(W_4,\nabla^{}_{W_4})$. On the object $(M_3,\eta^{}_{M_3})$ we act with $Z_{\mathrm{String}}$ at the object level, obtaining the object $\int_{M_3}\omega^\eta_{3;M_3}$ of $\mathrm{id}_\R^\otimes$. On the morphism $(W_4,\nabla^{}_{W_4})$ we act with $Z_{\mathrm{Spin}}$ at the morphism level, obtaining the morphism $\frac{1}{2}\int_{W_{4}}\mathbf{p}_1^{\mathrm{CW}}(\nabla^{}_{W_4})$ of $\exp(2\pi i -)^\otimes$. We thus obtain the object $(\frac{1}{2}\int_{W_{4}}\mathbf{p}_1^{\mathrm{CW}}(\nabla^{}_{W_4}), \int_{M_3}\omega^\eta_{3;M_3})$  of $\hofib{(\mathrm{id}_\R,\exp(2\pi i -))}$. The equivalence $\Xi$ finally maps this to
 \[
 \psi(M_3,\eta^{}_{M_3},W_4,\nabla^{}_{W_4})=\frac{1}{2}\int_{W_{4}}\mathbf{p}_1^{\mathrm{CW}}(\nabla^{}_{W_4})-\int_{M_3}\omega^\eta_{3;M_3}.
 \]
 Notice how by construction $\psi$ takes integral values.
\end{proof}
\begin{remark}[Further examples]\label{rem:further-examples}
There are other situations where exactly the same construction as the one we presented here applies. For instance, one can replace the first fractional Pontryagin class $\frac{1}{2}p_1$ with the first Chern class $c_1$. In this case, one notices that the classifying space $\mathrm{BSU}$ of the special unitary group is the homotopy fiber of $c_1\colon BU\to K(\Z,2)$ so that $\mathrm{SU}$ and $\mathrm{U}$ enjoy the same kind of relationship as $\mathrm{String}$ and $\mathrm{Spin}$. Then the construction presented in the article is repeated verbatim to obtain an integral formula realizing the isomorphism
\[
 \Bord_1^{\mathrm{SU}}\cong \Z/2\Z.
\]
In this case the relevant vanishing bordism group is $\Bord_1^{\mathrm{U}}=0$, and the relevant index theorem is the Gauss-Bonnet formula: for a closed oriented 2-manifold $W_2$ the integer $\int_{W_2}c_1(W_2)$ is even.
\par
Another example is obtained by going one step higher in the Whitehead tower of the orthogonal group and by considering the $\mathrm{Fivebrane}$ group instead of the $\mathrm{String}$ group, {\color{black}see, e.g., \cite{botvinnik, sati-fivebrane}}. One has that the classifying space $\mathrm{BFivebrane}$ is the homotopy fiber of $\frac{1}{6}p_2\colon \mathrm{BString}\to K(\Z,8)$, so $\mathrm{Fivebrane}$ is to $\mathrm{String}$ as $\mathrm{String}$ is to $\mathrm{Spin}$, and one has a vanishing 7-dimensional bordism group given by $\Bord_7^{\mathrm{String}}=0$. For 
a closed oriented string 8-manifold $W_8$, which is in particular a closed oriented spin 8-manifold with $p_1(W_8)=0$ in $H^8(W_8;\mathbb{Q})$, the Atiyah--Singer index theorem gives that
\[
\hat{A}(W_8)=\frac{1}{5760}\int_{W_8}(-4p_2(W_8)+7p_1(W_8)^2)=-\frac{1}{1440}\int_{W_8}p_2(W_8)
\]
is an integer. Therefore $\frac{1}{6}\int_{W_8}p_2(W_8)\in 240 \Z$, and verbatim repeating the arguments in the present paper should give an integral formula realizing the isomorphism
\[
 \Bord_7^{\mathrm{Fivebrane}}\cong \Z/240\Z.
\]
The reader should however be advised that the argument for fivebrane bordism sketched above is not complete: in order to make it fully work, one would need to know that any principal string bundle admits a string connection and as we noticed in Remark \ref{rem:do-string-connections-exist?} this is presently not clear. The fact that the argument presented above produces the correct order $240$ for the seventh fivebrane bordism group could be seen as an indication that the answer to whether every principal string bundle admits a string connection may be affirmative.
\end{remark}

}

\section{Conflict of Interest statement}
 On behalf of all authors, the corresponding author states that there is no conflict of interest.

\nocite{*}
\bibliographystyle{alpha}
\bibliography{bibliography}
\end{document}